\newtheorem{theorem}{Theorem}[section]
\newtheorem{lemma}[theorem]{Lemma}
\theoremstyle{definition}
\newtheorem{definition}[theorem]{Definition}
\newtheorem{example}[theorem]{Example}
\theoremstyle{remark}
\newtheorem{remark}[theorem]{Remark}
\numberwithin{equation}{section}
\theoremstyle{plain}
\newtheorem{axiom}{Axiom}
\newtheorem{conjecture}{Conjecture}
\newtheorem{corollary}{Corollary}
\newtheorem{exercise}{Exercise}
\newtheorem{proposition}{Proposition}
\chardef\@x10\chardef\@xv60
\def\tcitime{
\def\@time{%
  \@minute\time\@hour\@minute\divide\@hour\@xv
  \ifnum\@hour<\@x 0\fi\the\@hour:%
  \multiply\@hour\@xv\advance\@minute-\@hour
  \ifnum\@minute<\@x 0\fi\the\@minute
  }}%
\def\x@hyperref#1#2#3{%
   \catcode`\~ = 12
   \catcode`\$ = 12
   \catcode`\_ = 12
   \catcode`\# = 12
   \catcode`\& = 12
   \y@hyperref{#1}{#2}{#3}%
}
\def\y@hyperref#1#2#3#4{%
   #2\ref{#4}#3
   \catcode`\~ = 13
   \catcode`\$ = 3
   \catcode`\_ = 8
   \catcode`\# = 6
   \catcode`\& = 4
}
\def\QCTOpt[#1]#2{%
  \def\QCTOptB{#1}
  \def\QCTOptA{#2}
}
\def\QCTNOpt#1{%
  \def\QCTOptA{#1}
  \let\QCTOptB\empty
}
\def\Qct{%
  \@ifnextchar[{%
    \QCTOpt}{\QCTNOpt}
}
\def\QCBOpt[#1]#2{%
  \def\QCBOptB{#1}%
  \def\QCBOptA{#2}%
}
\def\QCBNOpt#1{%
  \def\QCBOptA{#1}%
  \let\QCBOptB\empty
}
\def\Qcb{%
  \@ifnextchar[{%
    \QCBOpt}{\QCBNOpt}%
}
\def\PrepCapArgs{%
  \ifx\QCBOptA\empty
    \ifx\QCTOptA\empty
      {}%
    \else
      \ifx\QCTOptB\empty
        {\QCTOptA}%
      \else
        [\QCTOptB]{\QCTOptA}%
      \fi
    \fi
  \else
    \ifx\QCBOptA\empty
      {}%
    \else
      \ifx\QCBOptB\empty
        {\QCBOptA}%
      \else
        [\QCBOptB]{\QCBOptA}%
      \fi
    \fi
  \fi
}
\def\GRAPHICSPS#1{%
 \ifcase\GRAPHICSTYPE
   \special{ps: #1}%
 \or
   \special{language "PS", include "#1"}%
 \fi
}%
\def\graffile#1#2#3#4{%
    \bgroup
	   \@inlabelfalse
       \leavevmode
       \@ifundefined{bbl@deactivate}{\def~{\string~}}{\activesoff}%
        \raise -#4 \BOXTHEFRAME{%
           \hbox to #2{\raise #3\hbox to #2{\null #1\hfil}}}%
    \egroup
}%
\def\draftbox#1#2#3#4{%
 \leavevmode\raise -#4 \hbox{%
  \frame{\rlap{\protect\tiny #1}\hbox to #2%
   {\vrule height#3 width\z@ depth\z@\hfil}%
  }%
 }%
}%
\let\nographics=\@msidraft
\newif\ifwasdraft
\def\GRAPHIC#1#2#3#4#5{%
   \ifnum\@msidraft=\@ne\draftbox{#2}{#3}{#4}{#5}%
   \else\graffile{#1}{#3}{#4}{#5}%
   \fi
}
\def\addtoLaTeXparams#1{%
    \edef\LaTeXparams{\LaTeXparams #1}}%
\newif\ifBoxFrame \BoxFramefalse
\newif\ifOverFrame \OverFramefalse
\newif\ifUnderFrame \UnderFramefalse
\def\BOXTHEFRAME#1{%
   \hbox{%
      \ifBoxFrame
         \frame{#1}%
      \else
         {#1}%
      \fi
   }%
}
\def\doFRAMEparams#1{\BoxFramefalse\OverFramefalse\UnderFramefalse\readFRAMEparams#1\end}%
\def\readFRAMEparams#1{%
 \ifx#1\end%
  \let\next=\relax
  \else
  \ifx#1i\dispkind=\z@\fi
  \ifx#1d\dispkind=\@ne\fi
  \ifx#1f\dispkind=\tw@\fi
  \ifx#1t\addtoLaTeXparams{t}\fi
  \ifx#1b\addtoLaTeXparams{b}\fi
  \ifx#1p\addtoLaTeXparams{p}\fi
  \ifx#1h\addtoLaTeXparams{h}\fi
  \ifx#1X\BoxFrametrue\fi
  \ifx#1O\OverFrametrue\fi
  \ifx#1U\UnderFrametrue\fi
  \ifx#1w
    \ifnum\@msidraft=1\wasdrafttrue\else\wasdraftfalse\fi
    \@msidraft=\@ne
  \fi
  \let\next=\readFRAMEparams
  \fi
 \next
 }%
\def\IFRAME#1#2#3#4#5#6{%
      \bgroup
      \let\QCTOptA\empty
      \let\QCTOptB\empty
      \let\QCBOptA\empty
      \let\QCBOptB\empty
      #6%
      \parindent=0pt
      \leftskip=0pt
      \rightskip=0pt
      \setbox0=\hbox{\QCBOptA}%
      \@tempdima=#1\relax
      \ifOverFrame
          \typeout{This is not implemented yet}%
          \show\HELP
      \else
         \ifdim\wd0>\@tempdima
            \advance\@tempdima by \@tempdima
            \ifdim\wd0 >\@tempdima
               \setbox1 =\vbox{%
                  \unskip\hbox to \@tempdima{\hfill\GRAPHIC{#5}{#4}{#1}{#2}{#3}\hfill}%
                  \unskip\hbox to \@tempdima{\parbox[b]{\@tempdima}{\QCBOptA}}%
               }%
               \wd1=\@tempdima
            \else
               \textwidth=\wd0
               \setbox1 =\vbox{%
                 \noindent\hbox to \wd0{\hfill\GRAPHIC{#5}{#4}{#1}{#2}{#3}\hfill}\\%
                 \noindent\hbox{\QCBOptA}%
               }%
               \wd1=\wd0
            \fi
         \else
            \ifdim\wd0>0pt
              \hsize=\@tempdima
              \setbox1=\vbox{%
                \unskip\GRAPHIC{#5}{#4}{#1}{#2}{0pt}%
                \break
                \unskip\hbox to \@tempdima{\hfill \QCBOptA\hfill}%
              }%
              \wd1=\@tempdima
           \else
              \hsize=\@tempdima
              \setbox1=\vbox{%
                \unskip\GRAPHIC{#5}{#4}{#1}{#2}{0pt}%
              }%
              \wd1=\@tempdima
           \fi
         \fi
         \@tempdimb=\ht1
         \advance\@tempdimb by -#2
         \advance\@tempdimb by #3
         \leavevmode
         \raise -\@tempdimb \hbox{\box1}%
      \fi
      \egroup%
}%
\def\DFRAME#1#2#3#4#5{%
  \hfil\break
  \bgroup
     \leftskip\@flushglue
	 \rightskip\@flushglue
	 \parindent\z@
	 \parfillskip\z@skip
     \let\QCTOptA\empty
     \let\QCTOptB\empty
     \let\QCBOptA\empty
     \let\QCBOptB\empty
	 \vbox\bgroup
        \ifOverFrame 
           #5\QCTOptA\par
        \fi
        \GRAPHIC{#4}{#3}{#1}{#2}{\z@}%
        \ifUnderFrame 
           \break#5\QCBOptA
        \fi
	 \egroup
   \egroup
   \break
}%
\def\FFRAME#1#2#3#4#5#6#7{%
  \@ifundefined{floatstyle}
    {
     \begin{figure}[#1]%
    }
    {
	 \ifx#1h
      \begin{figure}[H]%
	 \else
      \begin{figure}[#1]%
	 \fi
	}
  \let\QCTOptA\empty
  \let\QCTOptB\empty
  \let\QCBOptA\empty
  \let\QCBOptB\empty
  \ifOverFrame
    #4
    \ifx\QCTOptA\empty
    \else
      \ifx\QCTOptB\empty
        \caption{\QCTOptA}%
      \else
        \caption[\QCTOptB]{\QCTOptA}%
      \fi
    \fi
    \ifUnderFrame\else
      \label{#5}%
    \fi
  \else
    \UnderFrametrue%
  \fi
  \begin{center}\GRAPHIC{#7}{#6}{#2}{#3}{\z@}\end{center}%
  \ifUnderFrame
    #4
    \ifx\QCBOptA\empty
      \caption{}%
    \else
      \ifx\QCBOptB\empty
        \caption{\QCBOptA}%
      \else
        \caption[\QCBOptB]{\QCBOptA}%
      \fi
    \fi
    \label{#5}%
  \fi
  \end{figure}%
 }%
\def\makeactives{
  \catcode`\"=\active
  \catcode`\;=\active
  \catcode`\:=\active
  \catcode`\'=\active
  \catcode`\~=\active
}
   \gdef\activesoff{%
      \def"{\string"}%
      \def;{\string;}%
      \def:{\string:}%
      \def'{\string'}%
      \def~{\string~}%
    }
\def\FRAME#1#2#3#4#5#6#7#8{%
 \bgroup
 \ifnum\@msidraft=\@ne
   \wasdrafttrue
 \else
   \wasdraftfalse%
 \fi
 \def\LaTeXparams{}%
 \dispkind=\z@
 \def\LaTeXparams{}%
 \doFRAMEparams{#1}%
 \ifnum\dispkind=\z@\IFRAME{#2}{#3}{#4}{#7}{#8}{#5}\else
  \ifnum\dispkind=\@ne\DFRAME{#2}{#3}{#7}{#8}{#5}\else
   \ifnum\dispkind=\tw@
    \edef\@tempa{\noexpand\FFRAME{\LaTeXparams}}%
    \@tempa{#2}{#3}{#5}{#6}{#7}{#8}%
    \fi
   \fi
  \fi
  \ifwasdraft\@msidraft=1\else\@msidraft=0\fi{}%
  \egroup
 }%
\def\TEXUX#1{"texux"}
\long\def\QQQ#1#2{%
     \long\expandafter\def\csname#1\endcsname{#2}}%
\long\def\QQA#1#2{}%
\def\QTR#1#2{{\csname#1\endcsname {#2}}}%
\def\EXPAND#1[#2]#3{}%
\def\NOEXPAND#1[#2]#3{}%
\def\LaTeXparent#1{}%
\def\ChildStyles#1{}%
\def\ChildDefaults#1{}%
\def\QTagDef#1#2#3{}%
  \providecommand{\UNICODE}[2][]{\protect\rule{.1in}{.1in}}
  \providecommand{\U}[1]{\protect\rule{.1in}{.1in}}
\def\QQfnmark#1{\footnotemark}
 \def\abstract{%
  \if@twocolumn
   \section*{Abstract (Not appropriate in this style!)}%
   \else \small 
   \begin{center}{\bf Abstract\vspace{-.5em}\vspace{\z@}}\end{center}%
   \quotation 
   \fi
  }%
   \def\registered{\relax\ifmmode{}\r@gistered
                    \else$\m@th\r@gistered$\fi}%
 \def\r@gistered{^{\ooalign
  {\hfil\raise.07ex\hbox{$\scriptstyle\rm\text{R}$}\hfil\crcr
  \mathhexbox20D}}}}{}%
\newdimen\theight
\def\newfmtname{LaTeX2e}
  \DeclareOldFontCommand{\rm}{\normalfont\rmfamily}{\mathrm}
  \DeclareOldFontCommand{\sf}{\normalfont\sffamily}{\mathsf}
  \DeclareOldFontCommand{\tt}{\normalfont\ttfamily}{\mathtt}
  \DeclareOldFontCommand{\bf}{\normalfont\bfseries}{\mathbf}
  \DeclareOldFontCommand{\it}{\normalfont\itshape}{\mathit}
  \DeclareOldFontCommand{\sl}{\normalfont\slshape}{\@nomath\sl}
  \DeclareOldFontCommand{\sc}{\normalfont\scshape}{\@nomath\sc}
\def\alpha{{\Greekmath 010B}}%
\def\beta{{\Greekmath 010C}}%
\def\gamma{{\Greekmath 010D}}%
\def\delta{{\Greekmath 010E}}%
\def\epsilon{{\Greekmath 010F}}%
\def\zeta{{\Greekmath 0110}}%
\def\eta{{\Greekmath 0111}}%
\def\theta{{\Greekmath 0112}}%
\def\iota{{\Greekmath 0113}}%
\def\kappa{{\Greekmath 0114}}%
\def\lambda{{\Greekmath 0115}}%
\def\mu{{\Greekmath 0116}}%
\def\nu{{\Greekmath 0117}}%
\def\xi{{\Greekmath 0118}}%
\def\pi{{\Greekmath 0119}}%
\def\rho{{\Greekmath 011A}}%
\def\sigma{{\Greekmath 011B}}%
\def\tau{{\Greekmath 011C}}%
\def\upsilon{{\Greekmath 011D}}%
\def\phi{{\Greekmath 011E}}%
\def\chi{{\Greekmath 011F}}%
\def\psi{{\Greekmath 0120}}%
\def\omega{{\Greekmath 0121}}%
\def\varepsilon{{\Greekmath 0122}}%
\def\vartheta{{\Greekmath 0123}}%
\def\varpi{{\Greekmath 0124}}%
\def\varrho{{\Greekmath 0125}}%
\def\varsigma{{\Greekmath 0126}}%
\def\varphi{{\Greekmath 0127}}%
\def\nabla{{\Greekmath 0272}}
\def\FindBoldGroup{%
   {\setbox0=\hbox{$\mathbf{x\global\edef\theboldgroup{\the\mathgroup}}$}}%
}
\def\Greekmath#1#2#3#4{%
    \if@compatibility
        \ifnum\mathgroup=\symbold
           \mathchoice{\mbox{\boldmath$\displaystyle\mathchar"#1#2#3#4$}}%
                      {\mbox{\boldmath$\textstyle\mathchar"#1#2#3#4$}}%
                      {\mbox{\boldmath$\scriptstyle\mathchar"#1#2#3#4$}}%
                      {\mbox{\boldmath$\scriptscriptstyle\mathchar"#1#2#3#4$}}%
        \else
           \mathchar"#1#2#3#4%
        \fi 
    \else 
        \FindBoldGroup
        \ifnum\mathgroup=\theboldgroup 
           \mathchoice{\mbox{\boldmath$\displaystyle\mathchar"#1#2#3#4$}}%
                      {\mbox{\boldmath$\textstyle\mathchar"#1#2#3#4$}}%
                      {\mbox{\boldmath$\scriptstyle\mathchar"#1#2#3#4$}}%
                      {\mbox{\boldmath$\scriptscriptstyle\mathchar"#1#2#3#4$}}%
        \else
           \mathchar"#1#2#3#4%
        \fi     	    
	  \fi}
\newif\ifGreekBold  \GreekBoldfalse
\let\SAVEPBF=\pbf
\def\pbf{\GreekBoldtrue\SAVEPBF}%
  \newcounter{equationnumber}  
  \def\mathletters{%
     \addtocounter{equation}{1}
     \edef\@currentlabel{\theequation}%
     \setcounter{equationnumber}{\c@equation}
     \setcounter{equation}{0}%
     \edef\theequation{\@currentlabel\noexpand\alph{equation}}%
  }
    \def\BibTeX{{\rm B\kern-.05em{\sc i\kern-.025em b}\kern-.08em
                 T\kern-.1667em\lower.7ex\hbox{E}\kern-.125emX}}}{}%
\def\AmS{{\protect\usefont{OMS}{cmsy}{m}{n}%
                A\kern-.1667em\lower.5ex\hbox{M}\kern-.125emS}}}{}%
\def\@@eqncr{\let\@tempa\relax
    \ifcase\@eqcnt \def\@tempa{& & &}\or \def\@tempa{& &}%
      \else \def\@tempa{&}\fi
     \@tempa
     \if@eqnsw
        \iftag@
           \@taggnum
        \else
           \@eqnnum\stepcounter{equation}%
        \fi
     \fi
     \global\tag@false
     \global\@eqnswtrue
     \global\@eqcnt\z@\cr}
\def\TCItag{\@ifnextchar*{\@TCItagstar}{\@TCItag}}
\def\@TCItag#1{%
    \global\tag@true
    \global\def\@taggnum{(#1)}}
\def\@TCItagstar*#1{%
    \global\tag@true
    \global\def\@taggnum{#1}}
\begin{document}
\title[Semirigid]{Semirigid GCD domains II}
\author{M. Zafrullah}
\address{Department of Mathematics, Idaho State University, Pocatello, 83209
ID}
\email{mzafrullah@usa.net}
\thanks{}
\subjclass[2000]{Primary 13A05, 13F15; Secondary 13G05}
\date{July 27, 2001}
\keywords{Semirigid, GCD domain, VFD, Pre-Schreier, HoFD, semi $t$-pure, SPD}
\dedicatory{Dedicated to the memory of Paul Cohn}

\begin{abstract}
Let $D$ be an integral domain with quotient field $K,$ throughout$.$ Call
two elements $x,y\in D\backslash \{0\}$ $v$-coprime if $xD\cap yD=xyD.$ Call
a nonzero non unit $r$ of an integral domain $D$ rigid if for all $x,y|r$ we
have $x|y$ or $y|x.$ Also call $D$ semirigid if every nonzero non unit of $D$
is expressible as a finite product of rigid elements. We show that a
semirigid domain $D$ is a GCD domain if and only if $D$ satisfies $\ast :$
product of every pair of non-$v$-coprime rigid elements is again rigid. Next
call $a\in D$ a valuation element if $aV\cap D=aD$ for some valuation ring $%
V $ with $D\subseteq V\subseteq K$ and call $D$ a VFD if every nonzero non
unit of $D$ is a finite product of valuation elements. It turns out that a
valuation element is what we call a packed element: a rigid element $r$ all
of whose powers are rigid and $\sqrt{rD}$ is a prime ideal. Calling $D$ a
semi packed domain (SPD) if every nonzero non unit of $D$ is a finite
product of packed elements, we study SPDs and explore situations in which an
SPD is a semirigid GCD domain.
\end{abstract}

\maketitle

\section{Introduction}

Let $D$ be an integral domain with quotient field $K.$ Some recent research
is a treasure trove of new ideas that are linked to some old ideas in an
uncanny fashion. Some of these are concepts such as a homogeneous element of
Chang \cite{[Ch]}, one that belongs to a unique maximal $t$-ideal and a
valuation element of Chang and Reinhart \cite{[CR]}, i.e. an element $a$
such that $aV\cap D=aD$ for some valuation ring $V$ with $D\subseteq
V\subseteq K.$ It is a beautiful notion and its properties are linked with
the so called rigid elements that I knew and worked with, long ago. The aim
of this note is to highlight the connections, smooth out the kinks caused by
changes in terminology and to present some new results.

Chang \cite{[Ch]} calls a domain a HoFD if every nonzero non unit of $D$ is
expressible as a product of mutually $t$-comaximal homogeneous elements and
says HoFDs were first studied in \cite{[AMZ]}, of course with different
terminology. (A homogeneous element was "$t$-pure" and a HoFD was a semi $t$%
-pure domain.) According to \cite[Corollary 1.2]{[CR]} a valuation element $%
a $ of a domain $D$ has the property that for all $x,y|a$ we have $x|y$ or $%
y|x.$ This makes $a$ a rigid element of Cohn \cite{[Coh rigid]}. But the
valuation elements of \cite{[CR]} beat Cohn's rigid elements by a mile in
properties. To be exact, let's call an element $r$ of $D$ rigid if $r$ is a
nonzero non-unit such that for all $x,y|r$ we have $x|y$ or $y|x.$ Then a
valuation element $a,$ in a VFD, is rigid such that every power of $a$ is
rigid and $\sqrt{a}$ is a prime \cite[Corollaries 1.2, 1.11]{[CR]}, here $%
\sqrt{a}$ represents the radical of he ideal $(a).$ Let's call an element
with these properties a packed element. Let's also call $D$ semirigid
(resp., semi homogeneous, semi packed) if every nonzero non unit of $D$ is
expressible as a finite product of rigid (resp., homogeneous, packed)
elements of $D.$ The trouble with the semirigid (resp., semi homogeneous)
domains is that they are very general. For example every irreducible
element, i.e., a nonzero non-unit $a$ such that $a=\alpha \beta \Rightarrow $
$\alpha $ is a unit or $\beta $ is, is rigid. But the atomic domains, i.e.,
domains whose nonzero non-units are expressible as finite products of
irreducible elements, often have little or no form of uniqueness of
factorization \cite{[AAZ]}. For example, in $D=F[[X^{2},X^{3}]],$ that is
Noetherian and hence atomic, the elements $X^{2}$ and $X^{3}$ are
irreducible, and $(X^{2})^{3}=(X^{3})^{2}=X^{6}.$ That is $X^{6}$ has two
distinct factorizations. On the other hand, as we shall show, semi
homogeneous domains are actually HoFDs and so do have a sort of uniqueness
of factorization, but only just.

One way of getting such wayward concepts to deliver unique factorization of
some sort is to bring in a somewhat stronger notion of coprimality and some
conditions. Call two elements $a,b$ of a domain $D.$ $v$-coprime if $aD\cap
bD=abD$. Obviously $a,b$ are $v$-coprime if and only if $(a,b)^{-1}=D,$ if
and only, if $((a,b)^{-1})^{-1}=(a,b)_{v}=D,$ where $A\mapsto
A_{v}=(A^{-1})^{-1}$ is the usual star operation called the $v$-operation on 
$F(D),$ the set of nonzero factional ideals of $D.$ The notion of $v$%
-coprimality has been discussed in detail in \cite{[Z v-cop]}, where it is
shown, in somewhat general terms, that if, for $a,b,c\in D\backslash \{0\},$ 
$(a,b)_{v}=D$ and $a|bc,$ then $a|c.$ It was also shown in \cite[Proposition
2.2]{[Z v-cop]} that for $r_{1},...,r_{n},x\in D\backslash \{0\}$ $%
(r_{1}...r_{n},x)_{v}=D$ if and only if $(r_{i},x)_{v}=D.$ Let's call two
homogeneous (resp., rigid) elements $a,b$ similar, denoted $a\symbol{126}b,$
if $(a,b)_{v}\neq D$. We plan to show that a semi homogeneous domain is a
"HoFD" because the product of every pair of similar homogeneous elements of $%
D$ is again a homogeneous element, of $D,$ similar to them. We also show in
Section 2 that a semirigid domain is a semirigid GCD domain if and only if
the product of each pair of non-$v$-coprime rigid elements is rigid and give
examples to show that the product of two non-$v$-coprime rigid elements may
not be rigid. We shall also give examples to show that a homogeneous element
may not be rigid and a rigid element may not be homogeneous. On the semi
packed front, we establish semi packed domains (SPDs) as a concept parallel
to VFDs in section 3 and give at least one example of an SPD that is not a
semirigid GCD domain. We also explore the conditions that make an SPD (resp
a VFD) into a semirigid GCD domain.

It seems best to give the reader an idea of the $v$- and the $t$-operations
and some related concepts that we shall have the occasion to use. For $I\in
F(D),$ the set $I^{-1}=\{x\in K|xI\subseteq D\}$ is again a fractional ideal
and thus the relation $v$: $I\mapsto I_{v}$ is a function on $F(D).$ This
function is called the $v$-operation on $D.$ Similarly the relation $t$: $%
I\mapsto I_{t}=\cup \{F_{v}|$ $0\neq F$ is a finitely generated subideal of $%
I\}$ is a function on $F(D)$ and is called the $t$-operation on $D.$ These
are examples of the so called star operations. The reader may consult Jesse
Elliott's book \cite{[E]} for these operations. A fractional ideal $I$ is
called a $v$-ideal (resp. a $t$-ideal) if $I_{v}=I$ (resp., $I_{t}=I).$The
rather peculiar definition of the $t$-operation allows one to use Zorn's
Lemma to prove that each integral domain that is not a field has at least
one integral $t$-ideal maximal among integral $t$-ideals. This maximal $t$%
-ideal is prime and every proper, integral $t$-ideal is contained in at
least one maximal $t$-ideal. A minimal prime of a $t$-ideal is a $t$-ideal
and thus every height one prime is a $t$-ideal. The set of all maximal $t$%
-ideals of a domain $D$ is denoted by $t$-$Max(D).$ It can be shown that $%
D=\cap _{M\in t\text{-}Max(D)}D_{M}.$ A fractional ideal $I$ is said to be $%
t $-invertible if $(II^{-1})_{t}=D.$ A domain in which every nonzero
finitely generated ideal is $t$-invertible is called a Prufer $v$%
-multiplication domain (PVMD), a Prufer domain is a PVMD with every nonzero
ideal a $t$-ideal. Griffin \cite{[G]} showed that $D$ is a PVMD if and only
if $D_{M}$ is a valuation domain for each maximal $t$-ideal $M$ of $D.$
Given any domain $D$ the set $t$-$inv(D)$ of all $t$-invertible fractional $%
t $-ideals of $D$ is a group under the $t$-operation (i.e., $I\times
_{t}J=(IJ)_{t}).$ The group $t$-$inv(D)$ has the group $P(D)$ of nonzero
principal fractional ideals as its subgroup. The $t$-class group of $D$ is
the quotient group $Cl_{t}(D)=$ $t$-$inv(D)/P(D).$ What makes this group
interesting is that if $D$ is a Krull domain $Cl_{t}(D)$ is the divisor
class group of $D$ and if $D$ is a Prufer domain, $Cl_{t}(D)$ is the ideal
class group of $D$. Of interest for this note is the fact that a PVMD $D$ is
a GCD domain if and only if $Cl_{t}(D)$ is trivial. This group was
introduced in \cite{[B]}.

Next, call an element $a\in D\backslash \{0\}$ primal if for all $b,c\in
D\backslash \{0\}$ $a|bc$ implies that $a=rs$ where $r|b$ and $s|c.$ A
domain all of whose nonzero elements are primal is called a pre-Schreier
domain and an integrally closed pre-Schreier domain was called a Schreier
domain in \cite{[Coh bezout]}. Note that if $D$ is pre-Schreier then $%
Cl_{t}(D)$ is trivial, \cite{[B]}. Call a nonzero element $p$ of $D$
completely primal if every factor of $p$ is again primal. A power of a prime
element is an example of a completely primal element. According to Cohn \cite%
{[Coh bezout]}, if $S$ is a set multiplicatively generated by completely
primal elements of an integrally closed domain $D$ such that $D_{S}$ is a
Schreier domain, then $D$ is a Schreier domain. This Theorem is usually
referred to as: Cohn's Nagata type Theorem for Schreier domains. Our
terminology is standard or is explained at the point of entry.

\section{Semirigid GCD domains}

Let's note that for a finitely generated nonzero ideal $I=(x_{1},...,x_{n})$
we have $I_{v}=I_{t},$ so $x_{1},...,x_{n}$ being $v$-coprime (i.e., $%
(x_{1},...,x_{n})_{v}=D$) is the same as $x_{1},...,x_{n}$ being $t$%
-comaximal (i.e., $(x_{1},...,x_{n})_{t}=D$), which boils down to: $%
x_{1},...,x_{n}$ do not share any maximal $t$-ideal. We also note that $a$
is a homogeneous element if $aD$ is a $t$-homogeneous ideal in the sense of 
\cite{[AZ starsh]} and, sort of, following the convention of \cite{[AZ
starsh]} we shall denote by $M(a)$ the maximal $t$-ideal containing the
homogeneous element $a.$ Indeed we have $M(a)=\{x\in D|$ $(x,a)_{v}\neq D\}$
(cf. \cite[(2) Proposition 1]{[AZ starsh]}). The following two results can
be proved using Theorem 3.1 of \cite{[AMZ]}, but due to the change of
terminology, mentioned in the introduction, it seems safe to redo them here.
(I plan to address the change of terminology later and suggest a way to
patch things up.)

\begin{lemma}
\label{Lemma L1}Let $a$ and $b$ be two homogeneous elements of $D$ then $%
(a,b)_{v}\neq D$ if and only if $(a,b)$ is contained in the same maximal $t$%
-ideal if and only if $ab$ is a homogeneous element.
\end{lemma}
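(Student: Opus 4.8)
The plan is to establish the equivalence of the three conditions by a short cycle of implications, leaning only on facts recalled in the excerpt: that $I_v=I_t$ for any finitely generated ideal $I$ (so that $v$-coprimality of $a,b$ is the same as $t$-comaximality of $a,b$); that every proper integral $t$-ideal lies in a maximal $t$-ideal and that every maximal $t$-ideal is prime; and that a homogeneous element $a$ lies in a \emph{unique} maximal $t$-ideal $M(a)=\{x\in D:(x,a)_{v}\neq D\}$. Before anything else I would note that $ab$ is a nonzero non-unit (immediate, since $D$ is a domain and $a,b$ are non-units), so that the word ``homogeneous'' applied to $ab$ is legitimate.

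First I would dispatch the equivalence of ``$(a,b)_{v}\neq D$'' and ``$(a,b)$ lies in a common maximal $t$-ideal.'' The cleanest route is through the description of $M(a)$: since $a$ is homogeneous, $(a,b)_{v}\neq D$ says exactly that $b\in M(a)$, and because $a\in M(a)$ this is precisely the assertion that $a$ and $b$ share the maximal $t$-ideal $M(a)$. Conversely, if $a,b$ both lie in some maximal $t$-ideal $M$, then uniqueness of the maximal $t$-ideal containing the homogeneous $a$ forces $M=M(a)$, whence $b\in M(a)$ and $(a,b)_{v}\neq D$. (Alternatively one argues directly: as $(a,b)$ is finitely generated, $(a,b)_{v}=(a,b)_{t}$, so $(a,b)_{v}\neq D$ means $(a,b)_{t}$ is a proper $t$-ideal, hence contained in some maximal $t$-ideal $M\ni a,b$, and the reverse is the trivial containment $(a,b)_{t}\subseteq M\neq D$.)

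It then remains to tie in the third condition. For the forward direction, assume $a,b$ lie in a common maximal $t$-ideal $M$; uniqueness gives $M=M(a)=M(b)$, so $ab\in M$. To see $M$ is the only maximal $t$-ideal above $ab$, suppose $ab\in N$ with $N$ a maximal $t$-ideal: since $N$ is prime, $a\in N$ or $b\in N$, and in either case uniqueness yields $N=M(a)=M$ or $N=M(b)=M$. Thus $ab$ sits in the single maximal $t$-ideal $M$, i.e. $ab$ is homogeneous. Conversely, if $ab$ is homogeneous, then from $a\in M(a)$ and $b\in M(b)$ we get $ab\in M(a)$ and $ab\in M(b)$; as $ab$ lies in a unique maximal $t$-ideal, $M(a)=M(b)$, which places $a$ and $b$ in the common maximal $t$-ideal $M(a)$, returning us to the second condition.

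The argument is essentially bookkeeping, and the only genuinely load-bearing steps are the two that exploit primality and uniqueness together: splitting $ab\in N$ into $a\in N$ or $b\in N$ via the primality of the maximal $t$-ideal $N$, and then collapsing the resulting ideal to $M$ by the uniqueness clause of homogeneity. I expect the main point requiring care to be disciplined use of this uniqueness each time one passes between an element ($a$, $b$, or $ab$) and a maximal $t$-ideal containing it, so that no spurious second maximal $t$-ideal is allowed to survive.
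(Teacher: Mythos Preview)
Your proposal is correct and follows essentially the same route as the paper's proof: both arguments rest on the description $M(a)=\{x\in D:(x,a)_v\neq D\}$, the primality of maximal $t$-ideals (to split $ab\in N$ into $a\in N$ or $b\in N$), and repeated use of the uniqueness clause in the definition of ``homogeneous'' to force all maximal $t$-ideals in sight to coincide. Your cycle of implications is in fact a bit more explicitly organized than the paper's version, which compresses the step ``$ab$ homogeneous $\Rightarrow (a,b)_v\neq D$'' into the phrase ``by definition.''
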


\begin{proof}
Let $b$ be a homogeneous element belonging to the maximal $t$-ideal $P.$ For
any nonzero finitely generated ideal $A,$ $(A,b)_{v}\neq D$ implies that
that $A\subseteq P.$ This is because $(A,b)_{v}\neq D$ implies $(A,b)$ has
to be contained in some maximal $t$-ideal and $P$ is the only maximal $t$%
-ideal that contains $b.$ So $A\subseteq P.$ Now $(a,b)_{v}\neq D$ implies
that $a,b$ both belong to the same maximal $t$-ideal say $P.$ Next note that 
$x\in M(a)\Leftrightarrow (x,a)_{v}\neq D.$ So $x\in M(a)$ implies $x$
belongs to $P.$ Thus $M(a)=P$ and similarly $M(b)=P$ forcing $M(a)=M(b).$
Suppose $ab$ belongs to a maximal $t$-ideal $P.$ Then $a\in P$ or $b\in P.$
If $a\in P,$ then $M(a)=P.$ But as $(b,a)_{v}\neq D,$ $M(a)=M(b)$ whence $ab$
is a homogeneous element, as $P(a)=P(b)$ is the only maximal $t$-ideal
containing $ab$. Finally if $ab$ is $t$-homogeneous then, by definition, $%
(a,b)_{v}\neq D.$
\end{proof}

\begin{proposition}
\label{Proposition P1} An integral domain $D$ is a HoFD if and only if $D$
is a semi homogeneous domain.
\end{proposition}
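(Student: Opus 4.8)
The plan is to prove the two implications separately; the forward direction is immediate, and all the content lies in the converse, where Lemma~\ref{Lemma L1} does the work.

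First I would dispose of the easy direction. If $D$ is a HoFD, then by definition every nonzero non-unit is a product of mutually $t$-comaximal homogeneous elements; forgetting the comaximality, this is in particular a finite product of homogeneous elements, so $D$ is semi homogeneous. Nothing more is needed here.

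For the converse, suppose $D$ is semi homogeneous and fix a nonzero non-unit $x$ with a factorization $x = h_1 \cdots h_n$ into homogeneous elements. The idea is to \emph{coalesce} the factors that share a maximal $t$-ideal. For each $i$ let $M(h_i)$ denote the unique maximal $t$-ideal containing $h_i$, partition $\{1,\dots,n\}$ according to the value of $M(h_i)$, and for each maximal $t$-ideal $M$ that occurs set $g_M = \prod_{M(h_i)=M} h_i$. The key step is that each $g_M$ is again homogeneous with $M(g_M)=M$. This follows from Lemma~\ref{Lemma L1} by induction on the number of factors: if $a$ and $b$ are homogeneous and both lie in $M$, then $(a,b)$ is contained in the single maximal $t$-ideal $M$, so by Lemma~\ref{Lemma L1} the product $ab$ is homogeneous, and the proof of that lemma pins down $M$ as its unique maximal $t$-ideal. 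Iterating keeps the partial products homogeneous and inside $M$, so the lemma may be reapplied at each stage.

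It then remains to verify that the coalesced factors are mutually $t$-comaximal. For distinct maximal $t$-ideals $M \neq M'$, the homogeneous elements $g_M$ and $g_{M'}$ have different unique maximal $t$-ideals and therefore lie in no common maximal $t$-ideal; by Lemma~\ref{Lemma L1} this forces $(g_M, g_{M'})_v = D$, i.e. $g_M$ and $g_{M'}$ are $t$-comaximal. Hence $x = \prod_M g_M$ writes $x$ as a product of mutually $t$-comaximal homogeneous elements, so $D$ is a HoFD. I expect the only delicate point to be the inductive upgrade of Lemma~\ref{Lemma L1} from two factors to many, where one must track that every partial product remains homogeneous and stays inside the same maximal $t$-ideal $M$ so that the lemma applies afresh at each step.
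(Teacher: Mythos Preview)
Your proof is correct and follows essentially the same approach as the paper: partition the homogeneous factors by their unique maximal $t$-ideal, use Lemma~\ref{Lemma L1} (inductively) to see that each block-product is again homogeneous, and observe that block-products for distinct maximal $t$-ideals are $t$-comaximal. Your write-up is in fact a bit more careful than the paper's about the inductive step, but there is no substantive difference in strategy.
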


\begin{proof}
Suppose that $D$ is a semi homogeneous domain. Lemma \ref{Lemma L1} shows
that the product of every pair of similar homogeneous elements of $D$ is
homogeneous. Let $x=h_{1}h_{2}...h_{n}$ where each of $h_{i}$ is a
homogeneous element. Now $M_{1},...M_{r}$ be the set of distinct maximal $t$%
-ideals containing $h$. Let $H_{j}=\Pi h$ where $h$ ranges over $h_{i}\in
M_{j}.$ By Lemma \ref{Lemma L1}, $H_{j}$ are homogeneous and mutually $t$%
-comaximal. Thus we have $x=\Pi _{j=1}^{r}H_{j}$ where $H_{i}$ are mutually $%
v$-coprime homogeneous. The converse is obvious.
\end{proof}

It was shown in \cite{[Z semi]} that if a nonzero non unit $x$ in a GCD
domain is expressible as a finite product of rigid elements, then $x$ is
uniquely expressible as a product of finitely many mutually coprime rigid
elements. Thus showing that in a semirigid GCD domain every nonzero non unit 
$x$ is expressible uniquely as a product of mutually coprime elements. So a
valuation ring $V$ of any rank is an example of a semirigid GCD domain and
so is a polynomial ring over $V$. Griffin, in \cite{[Gr]}, called a domain $%
D $ an Independent Ring of Krull type (IRKT) if $D$ has a family of prime
ideals $\{P_{\alpha }\}_{\alpha \in I}$ such that (a) $D_{P_{\alpha }}$ is a
valuation domain for each $\alpha \in I,$ (b) $D=\cap _{\alpha \in
I}D_{P_{\alpha }}$ is locally finite and (c) No pair of distinct members of $%
\{P_{\alpha }\}_{\alpha \in I}$ contains a nonzero prime ideal. It was shown
in Theorem 5 of \cite{[Z semi]} that a semirigid GCD domain is indeed an
IRKT. Also, it was shown in Theorem B of \cite{[Z rigid]} that a GCD IRKT is
a semirigid GCD domain. Later, a domain satisfying only (b) and (c) above,
requiring that $P_{\alpha }$ are maximal $t$-ideals, was called in \cite{[AZ
splittin]} a weakly Matlis domain. An IRKT is a PVMD, \cite{[Gr]}. Also,
suppose that $D$ is a weakly Matlis GCD domain. Noting that a GCD domain is
a PVMD which makes localization at each maximal $t$-ideal a valuation domain
we have each of $D_{P_{\alpha }}$ a valuation domain, in the definition of a
weakly Matlis domain and making it an IRKT. Finally, a GCD IRKT is a
semirigid GCD domain, by Theorem B of \cite{[Z rigid]}. We now show that
introducing a simple property $\ast :$ for every pair $r,s$ of rigid
elements $(r,s)_{v}\neq D\Leftrightarrow rs$ is rigid, we can make a
semirigid domain into a semirigid GCD domain.

\begin{lemma}
\label{Lemma L2} Let $D$ be a semirigid domain with $\ast :$ for every pair $%
r,s$ of rigid elements $(r,s)_{v}\neq D\Leftrightarrow rs$ is rigid$.$ Then
the following hold. (1) Suppose that $r,s$ are two similar rigid elements.
Then $r$ and $s$ are comparable, i.e., $r|s$ or $s|r,$ (2) If $r$ is a rigid
element and $s,t$ are rigid elements, each similar to $r,$ then $s$ and $t$
are similar, (3) A finite product of mutually similar rigid elements is
rigid similar to each of the factors and (4) if a rigid element $r$ divides
a product $x=x_{1}x_{2}...x_{n}$ of mutually $v$-coprime rigid elements $%
x_{1},...,x_{n}$ then $r$ divides exactly one of the $x_{i},$ in a semirigid
domain with property $\ast .$
\end{lemma}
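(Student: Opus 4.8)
The plan is to prove the four assertions in the order stated, since each feeds the next, leaning throughout on property $\ast$ together with the two facts about $v$-coprimality quoted from \cite{[Z v-cop]} in the introduction: (i) if $(a,b)_{v}=D$ and $a\mid bc$ then $a\mid c$, and (ii) $(r_{1}\cdots r_{n},x)_{v}=D$ if and only if $(r_{i},x)_{v}=D$ for every $i$. The recurring elementary device I will use is that for a non-unit $r$, whenever $r\mid u$ we have $uD\subseteq rD$, so $(u,r)_{v}\subseteq (rD)_{v}=rD\neq D$, because principal ideals are $v$-ideals and the $v$-operation is order-preserving; in particular $r\mid u$ already forces $r\sim u$.

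For (1), similarity $r\sim s$ means $(r,s)_{v}\neq D$, so property $\ast$ makes $rs$ rigid; since $r\mid rs$ and $s\mid rs$, rigidity of $rs$ forces $r\mid s$ or $s\mid r$. For (2), I will run the case analysis obtained by applying (1) to the similar pairs $r,s$ and $r,t$. The four combinations of comparabilities all yield either that one of $s,t$ divides the other (when both divide $r$ this uses rigidity of $r$; when one divides $r$ which divides the other it is immediate) or that $r$ divides both $s$ and $t$. In the first situation $(s,t)_{v}$ is a proper principal ideal, and in the second $(s,t)\subseteq rD$ gives $(s,t)_{v}\subseteq rD\neq D$; either way $(s,t)_{v}\neq D$, i.e. $s\sim t$.

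For (3), I will induct on the number of factors, the two-factor case being exactly $\ast$ together with the device above. Writing $P=r_{1}\cdots r_{n-1}$, the inductive hypothesis makes $P$ rigid and similar to each $r_{i}$ with $i<n$; applying (2) to the triple $r_{n-1},P,r_{n}$ (both $P$ and $r_{n}$ being similar to $r_{n-1}$) yields $P\sim r_{n}$, whence $\ast$ gives that $Pr_{n}=r_{1}\cdots r_{n}$ is rigid. Similarity of the product to each factor then follows because each factor divides the product.

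The substantive step is (4). For ``at most one,'' if $r\mid x_{i}$ and $r\mid x_{j}$ with $i\neq j$, then $(x_{i},x_{j})\subseteq rD$ gives $(x_{i},x_{j})_{v}\neq D$, contradicting $v$-coprimality. For ``at least one,'' observe that $r\mid x_{1}\cdots x_{n}$ makes $r$ and the product not $v$-coprime, so by (ii) some index has $(r,x_{i})_{v}\neq D$, i.e. $r\sim x_{i}$; and by (2) there is only one such index, since two would force $x_{i}\sim x_{j}$ against $v$-coprimality. For every $j\neq i$ we then have $(r,x_{j})_{v}=D$, so (i) lets me cancel the factors $x_{j}$ one at a time from $r\mid x_{1}\cdots x_{n}$, leaving $r\mid x_{i}$. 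I expect this last step to be the main obstacle: the point to watch is the bookkeeping of the successive cancellations and the check that the unique index singled out by the similarity argument is exactly the index that survives the cancellation, which holds because cancellation only strips off the $v$-coprime factors.
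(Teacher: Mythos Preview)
Your proof is correct and follows essentially the same route as the paper: parts (1)--(3) match the paper's argument almost verbatim (comparability from rigidity of $rs$, the four-case analysis for transitivity of similarity, and induction using (2) to propagate similarity to the partial product). The only cosmetic difference is in (4): the paper, after isolating the unique index $k$ with $r\sim x_{k}$, invokes comparability from (1) and the hypothesis $r\mid x$ to conclude $r\mid x_{k}$, whereas you strip off the $v$-coprime factors $x_{j}$ one by one using the cancellation fact (i); both arguments are standard and equally short.
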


\begin{proof}
(1). Straightforward, as $rs$ is rigid.

(2). $r|s$ or $s|r$ and $r|t$ or $t|r.$ Four cases arise (i) $r|s$ and $r|t$ 
$\Rightarrow (s,t)_{v}\neq D,$ (ii) $r|s$ and $t|r\Rightarrow t|s$ (iii) $%
s|r $ and $r|t\Rightarrow s|t$ (iv) $s|r$ and $t|r\Rightarrow s|t$ or $t|s.$
In each case we have $s\symbol{126}t.$

(3). Suppose that $D$ is semirigid with the given property ($\ast $). Using
induction, one can show that in a semirigid domain with ($\ast ),$ a finite
product of mutually similar rigid elements is rigid. This is how it can be
accomplished: We know that the product of any two similar rigid elements is
rigid. Assume that we have established that the product of any set of $n$ of
rigid elements, $r_{1},r_{2},...,r_{n},$ similar to one of them and hence,
by (2), similar to each other, is rigid. Let $\boldsymbol{r~}%
=~r_{1}r_{2}...r_{n}$ and let $s$ be a rigid element similar to, one and
hence, each of $r_{i}$ and hence to $\mathbf{r.}$ But then by $\ast ,$ $%
\mathbf{r}s$ is rigid.

(4). Note that $(r,x)_{v}=rD\neq D,$ because $r|x.$ So $r$ cannot be $v$%
-coprime to each of $x_{i},$ \cite[Proposition 2.2]{[Z v-cop]}. Now, say, $r$
is non-$v$-coprime to $x_{i},x_{j}$ for $i\neq j.$ But then, by (2), $x_{i}%
\symbol{126}x_{j}$ which is impossible because $(x_{i},x_{j})_{v}=D.$ So $r$
is non-$v$-coprime to exactly one of $x_{i},$ say $x_{k}.$ Now as $D$ has
the property $\ast $ and as $r$ and $x_{k}$ are rigid, one of them divides
the other. But since $r|x$ already, we conclude that $r|x_{k}$.
\end{proof}

\bigskip

\begin{proposition}
\label{Proposition P1A} Let $D$ be a semirigid domain. Then every nonzero
non unit of $D$ is either rigid or can be written uniquely as a product of
finitely many mutually $v$-coprime rigid elements if and only if $\ast $:
for every pair $r,s$ of rigid elements $(r,s)_{v}\neq D\Leftrightarrow rs$
is rigid holds.
\end{proposition}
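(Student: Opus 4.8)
The statement is a biconditional, and the two implications call for quite different tools. For the implication $\ast\Rightarrow$ (unique $v$-coprime factorization) I would lean entirely on Lemma~\ref{Lemma L2}, whose conclusions are available precisely because $\ast$ is assumed; for the converse I must argue by hand, since Lemma~\ref{Lemma L2} is not at my disposal once $\ast$ is dropped. The forward implication is the routine one; the converse, specifically extracting ``$(r,s)_v\neq D\Rightarrow rs$ rigid'' from uniqueness, is where the real work lies.

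Assume $\ast$. First I would observe that similarity of rigid elements is an equivalence relation: it is reflexive because a nonunit $a$ satisfies $(a,a)_v=aD\neq D$, symmetric by definition, and transitive by Lemma~\ref{Lemma L2}(2). Given a nonzero nonunit $x=r_1\cdots r_n$ with each $r_i$ rigid, I would group the factors into similarity classes and let $R_j$ be the product over the $j$-th class. By Lemma~\ref{Lemma L2}(3) each $R_j$ is rigid, and by \cite[Proposition~2.2]{[Z v-cop]} factors drawn from different classes are $v$-coprime, so $(R_i,R_j)_v=D$ for $i\neq j$; this yields the existence of the desired factorization (with $x$ itself rigid when there is a single class). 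For uniqueness I would compare two such factorizations $R_1\cdots R_m=S_1\cdots S_p$: by Lemma~\ref{Lemma L2}(4) each $R_i$ divides exactly one $S_{\sigma(i)}$, and since $S_{\sigma(i)}$ in turn divides a unique $R_{\tau(\sigma(i))}$, $v$-coprimality of the $R$'s forces $\tau(\sigma(i))=i$, so $R_i$ and $S_{\sigma(i)}$ are associates and $\sigma$ is a bijection.

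For the converse, assume the factorization property. The half ``$rs$ rigid $\Rightarrow (r,s)_v\neq D$'' is immediate: $r,s\mid rs$ and rigidity of $rs$ give $r\mid s$ or $s\mid r$, whence $(r,s)_v=rD$ or $sD$, which is $\neq D$. For the other half I would argue by contradiction: suppose $r,s$ are rigid with $(r,s)_v\neq D$ but $rs$ is not rigid, so that $rs=t_1\cdots t_k$ with $k\ge 2$ mutually $v$-coprime rigid $t_i$. Writing $A=\{i:(t_i,r)_v\neq D\}$ and $B=\{i:(t_i,s)_v\neq D\}$, \cite[Proposition~2.2]{[Z v-cop]} shows $A\cup B=\{1,\dots,k\}$ (a $t_i$ coprime to both $r$ and $s$ would be coprime to $rs$, contradicting $t_i\mid rs$) and, by the same result together with the cited implication ``$(a,b)_v=D,\ a\mid bc\Rightarrow a\mid c$'' of \cite{[Z v-cop]}, that $r\mid\prod_{i\in A}t_i$ and $s\mid\prod_{i\in B}t_i$. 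Moreover every $t_i$ with $i\in A\setminus B$ divides $r$ (it is $v$-coprime to $s$ yet divides $rs$); as these divisors of the rigid element $r$ are pairwise comparable but also pairwise $v$-coprime, there can be at most one, so $|A\setminus B|\le 1$, and symmetrically $|B\setminus A|\le 1$.

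The clean case is $A\cap B=\varnothing$: then $\prod_{i\in A}t_i$ and $\prod_{i\in B}t_i$ multiply to $rs$ while being divisible by $r$ and $s$ respectively, forcing $r$ to be an associate of $\prod_{i\in A}t_i$ and $s$ of $\prod_{i\in B}t_i$; uniqueness then makes $|A|=|B|=1$, so $k=2$ and $(r,s)_v=(t_1,t_2)_v=D$, a contradiction. The main obstacle is the remaining configuration $A\cap B\neq\varnothing$, i.e.\ the existence of a rigid factor $t_i$ of $rs$ that is similar to \emph{both} $r$ and $s$. Ruling this out amounts to establishing enough transitivity of the similarity relation without the help of $\ast$, and this is exactly the phenomenon that $\ast$ is designed to govern (the introduction even promises examples where a product of two non-$v$-coprime rigid elements fails to be rigid). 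I expect this step to be the crux, and to require feeding the relation $\prod_{i\in A\cap B}t_i=(\,\prod_{i\in A}t_i/r)(\prod_{i\in B}t_i/s)$ back into the uniqueness clause, together with the bounds $|A\setminus B|,|B\setminus A|\le 1$, in order to force $k=1$.
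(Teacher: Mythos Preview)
Your forward direction is correct and is essentially the paper's argument: group the rigid factors into similarity classes via Lemma~\ref{Lemma L2}(2)--(3), and establish uniqueness via Lemma~\ref{Lemma L2}(4) by the back-and-forth divisibility you describe. The easy half of the converse ($rs$ rigid $\Rightarrow (r,s)_v\neq D$) is also fine.

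For the hard half of the converse, your case $A\cap B=\varnothing$ is handled correctly, but the case $A\cap B\neq\varnothing$ is left open; you say so yourself. This is a genuine gap. The paper's route here is much shorter than yours: having written $rs=r_1\cdots r_n$, the paper simply invokes Lemma~\ref{Lemma L2}(4) to assert that each of $r,s$ divides \emph{exactly one} $r_i$, whence $rs$ divides a product of at most two of the $r_i$ and so $n\le 2$; the case $n=2$ is then eliminated exactly as in your $A\cap B=\varnothing$ argument (divisors of $v$-coprime elements cannot be non-$v$-coprime).

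The point you have put your finger on is real, though: Lemma~\ref{Lemma L2} is stated for a semirigid domain \emph{with property~$\ast$}, and $\ast$ is precisely what the converse is supposed to deliver, so the paper's citation of Lemma~\ref{Lemma L2}(4) in this direction is, on its face, circular. Your problematic case $A\cap B\neq\varnothing$ is exactly the scenario in which the ``exactly one'' conclusion of Lemma~\ref{Lemma L2}(4) could fail absent $\ast$ (a rigid element similar to two members of a $v$-coprime family). So the paper does not actually supply the argument you are missing; it glides over the same spot. Your plan of feeding $\prod_{i\in A\cap B}t_i=(\prod_{i\in A}t_i/r)(\prod_{i\in B}t_i/s)$ back into the uniqueness clause, together with the bounds $|A\setminus B|,|B\setminus A|\le 1$, is a reasonable line of attack on the remaining case, but as written neither your proposal nor the paper's proof is complete there.
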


\begin{proof}
Let $x=r_{1}r_{2}...r_{n}$ be a nonzero non unit of $D.$ Pick $r_{1}$ and
collect all the rigid factors, from $r_{i},$ ($i=1,...,n$), that are similar
to $r_{1}.$ Next suppose that by a relabeling we can write $%
x=r_{1}r_{2}...r_{s_{1}}r_{s_{1}+1}...r_{n}$ where $r_{i}$ ($i=1,...,s_{1})$
are all the rigid factors of $x$ that are similar to $r_{1}.$ Set $\mathbf{r}%
_{1}=r_{1}r_{2}...r_{s_{1}}$. Note that since, by the procedure, each of $%
r_{i}$ ($i=1,...,s_{1})$ is $v$-coprime to each of $r_{i_{1}}$ ($%
i_{1}=s_{1}+1,...,n)$ we conclude that $\mathbf{r}_{1}$ is $v$-coprime to
each of $r_{i_{1}}$ ($i_{1}=s_{1}+1,...,n$) and, of course, each of $%
r_{i_{1}}$ ($i_{1}=s_{1}+1,...,n$) $v$-coprime to $\mathbf{r}_{1}.$ Now
select all the rigid elements similar to $r_{s_{1}+1}$ and suppose that by a
relabeling we can write $%
r_{s_{1}+1}...r_{n}=r_{s_{1}+1}r_{s_{1}+2}...r_{s_{2}}...r_{n}$, where $%
r_{j} $ ($j=s_{1}+1,...,s_{2})$ are similar to $r_{s_{1}+1}.$ Set $\mathbf{r}%
_{2}=r_{s_{1}+1}r_{s_{1}+2}...r_{s_{2}}.$ By, Lemma \ref{Lemma L2}, $\mathbf{%
r}_{2}$ is rigid. Since $r_{s_{1}+1},r_{s_{1}+2},...,r_{s_{2}}$ are each $v$%
-coprime to $\mathbf{r}_{1},$ and so is their product, we conclude that $%
\mathbf{r}_{1}$ and $\mathbf{r}_{2}$ are $v$-coprime rigid elements. Thus $x=%
\mathbf{r}_{1}\mathbf{r}_{2}r_{s_{2}+1}...r_{n}$ and continuing in this
manner we can write $x=\mathbf{r}_{1}\mathbf{r}_{2}\mathbf{...r}_{m}$ where $%
\mathbf{r}_{i}$ are mutually $v$-coprime rigid elements.

Now let $x=\mathbf{r}_{1}\mathbf{r}_{2}\mathbf{...r}_{m}$ be a product of
mutually $v$-coprime rigid elements in a domain $D$ with property $\ast .$
Also let $x=\mathbf{s}_{1}\mathbf{s}_{2}\mathbf{...s}_{n}.$ We claim that
each of the $\mathbf{r}_{i}$ is an associate of exactly one of the $\mathbf{s%
}_{j}$ and hence $m=n.$ For this note that by (4) of Lemma \ref{Lemma L2}, $%
\mathbf{r}_{1}|\mathbf{s}_{1}\mathbf{s}_{2}\mathbf{...s}_{n}$ implies that $%
\mathbf{r}_{1}$ divides exactly one one of the $\mathbf{s}_{j},$ say $%
\mathbf{s}_{1},$ by a relabeling. But then, considering $\mathbf{s}_{1}|$ $%
\mathbf{r}_{1}\mathbf{r}_{2}\mathbf{...r}$ and noting that $\mathbf{s}_{1}%
\symbol{126}\mathbf{r}_{1}\mathbf{\ }$we conclude that $\mathbf{s}_{1}|%
\mathbf{r}_{1}.$ This leaves us with $\mathbf{r}_{2}\mathbf{...r}_{m}=%
\mathbf{s}_{2}\mathbf{...s}_{n}.$ Now eliminating one by one, in this
manner, and noting that $\mathbf{r}_{i},$ $\mathbf{s}_{j}$ are non units
will take us to the conclusion, eventually.

Conversely let $D$ be a semirigid domain in which every nonzero non unit is
either a rigid element or is uniquely expressible as a finite product of
mutually $v$-coprime rigid elements and consider $x=rs$ where $r$ and $s$
are any two similar rigid elements. If in each case $rs$ is rigid, we are
done. To ensure that there is no other possibility we proceed as follows.
Because $x$ is expressible as a product of finitely many mutually $v$%
-coprime rigid elements we can write $rs=r_{1}...r_{n}.$ By (4) of Lemma \ref%
{Lemma L2}, each of $r,s$ divides exactly one of the $r_{i},$ so $n\leq 2.$
So let $rs=r_{1}r_{2},$ where $r|r_{1}$ and $s|r_{2}.$ Now this too is
impossible because, by assumption, $r$ and $s$ are non-$v$-coprime while $%
r_{1}$ and $r_{2}$ are $v$-coprime and obviously a pair of $v$-coprime
elements (such as $r_{1},r_{2})$ cannot have factors like $r|\acute{r}_{1}$
and $s|r_{2}$ with $(r,s)_{v}\neq D.$ (For this note that $r|\acute{r}_{1}$
and $s|r_{2}$ implies that $(\acute{r}_{1},r_{2})\subseteq (r,s)$ forces $D=(%
\acute{r}_{1},r_{2})_{v}\subseteq (r,s)_{v}\neq D,$ which is a
contradiction.)
\end{proof}

\begin{theorem}
\label{Theorem T1} A semirigid domain with property $\ast $ is a GCD domain.
\end{theorem}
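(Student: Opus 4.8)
The plan is to show that a semirigid domain $D$ with property $\ast$ is a GCD domain by reducing the problem to computing greatest common divisors of pairs of rigid elements, then assembling these using the unique factorization into mutually $v$-coprime rigid elements provided by Proposition \ref{Proposition P1A}. Recall that $D$ is a GCD domain precisely when every pair of nonzero elements has a greatest common divisor; equivalently, it suffices to produce a GCD for each pair of nonzero nonunits $x,y$.

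\textbf{Reduction to rigid factors.} First I would use Proposition \ref{Proposition P1A} to write each of $x$ and $y$ (if not already rigid) as a product of finitely many mutually $v$-coprime rigid elements, say $x=\mathbf{r}_1\cdots\mathbf{r}_m$ and $y=\mathbf{s}_1\cdots\mathbf{s}_n$. The $v$-coprimality means that distinct factors of $x$ share no maximal $t$-ideal, and likewise for $y$. I would then group the $\mathbf{s}_j$ according to whether they are similar to some $\mathbf{r}_i$: by part (2) of Lemma \ref{Lemma L2}, similarity partitions the rigid factors into classes that align with distinct maximal $t$-ideals, so at most one $\mathbf{s}_j$ can be similar to a given $\mathbf{r}_i$ and vice versa. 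Thus after relabeling I expect $x$ and $y$ to factor compatibly into a "common support" part, where $\mathbf{r}_i\symbol{126}\mathbf{s}_i$, and disjoint remainders that are mutually $v$-coprime to everything relevant.

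\textbf{The pairwise GCD.} The heart of the argument is that two similar rigid elements $r,s$ have a GCD, and here property $\ast$ does the work: by part (1) of Lemma \ref{Lemma L2}, similar rigid elements are comparable, so $r\mid s$ or $s\mid r$, and then the smaller one is visibly a GCD of the pair. For factors lying in different similarity classes ($v$-coprime factors), their GCD is a unit. The plan is to take $\gcd(x,y)=\prod_i \gcd(\mathbf{r}_i,\mathbf{s}_i)$ over the matched similar pairs, discarding the $v$-coprime leftovers; I would verify this is a common divisor directly and check the universal property by showing any common divisor $d$, itself factored into rigid elements via Proposition \ref{Proposition P1A}, must have each of its rigid factors dividing one of the matched $\gcd(\mathbf{r}_i,\mathbf{s}_i)$, using part (4) of Lemma \ref{Lemma L2} to control where each rigid factor of $d$ can land.

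\textbf{The main obstacle.} The delicate point will be verifying the universal property of the proposed GCD, namely that every common divisor divides $\prod_i\gcd(\mathbf{r}_i,\mathbf{s}_i)$. A common divisor $d$ need not be rigid, so I must factor it and track each rigid factor simultaneously against the $v$-coprime decompositions of both $x$ and $y$; the key leverage is that a rigid factor of $d$ dividing $x$ must (by part (4) of Lemma \ref{Lemma L2}) divide exactly one $\mathbf{r}_i$, and dividing $y$ must divide exactly one $\mathbf{s}_j$, forcing $\mathbf{r}_i\symbol{126}\mathbf{s}_j$ and hence $i=j$ in the matched indexing. Handling multiplicities cleanly — that several rigid factors of $d$ landing in the same similarity class multiply to something still dividing $\gcd(\mathbf{r}_i,\mathbf{s}_i)$ — is where part (3) of Lemma \ref{Lemma L2}, giving that a product of mutually similar rigid elements is again rigid, becomes essential, and this bookkeeping is the step I expect to require the most care.
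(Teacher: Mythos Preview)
Your approach is correct and genuinely different from the paper's. You construct $\gcd(x,y)$ directly from the $v$-coprime rigid factorizations of Proposition~\ref{Proposition P1A}, matching similar factors and taking the smaller of each comparable pair. The paper instead takes a structural detour: it first shows $(r,x)_v$ is principal for every rigid $r$ and every $x$, deduces that rigid elements are primal, hence that $D$ is pre-Schreier; then it invokes \cite{[AMZ]} to conclude rigid elements are homogeneous, so $D$ is a HoFD; then it shows each $D_M$ is a valuation domain, so $D$ is a PVMD; and finally it quotes that a pre-Schreier PVMD is a GCD domain. Your route is considerably more elementary and self-contained, needing only Lemma~\ref{Lemma L2} and Proposition~\ref{Proposition P1A}, whereas the paper's argument yields extra structural information (pre-Schreier, HoFD, PVMD) along the way that it uses later in Corollary~\ref{Corollary C2}.

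One small simplification to your plan: the ``multiplicity'' obstacle you flag does not actually arise. When you factor a common divisor $d=d_1\cdots d_p$ via Proposition~\ref{Proposition P1A}, the $d_j$ are mutually $v$-coprime, so two of them cannot divide the same rigid element $g_i$ (rigidity would force one to divide the other, contradicting $v$-coprimality of nonunits). Thus each $d_j$ lands in a distinct $g_{i_j}$, and $d=\prod_j d_j \mid \prod_j g_{i_j} \mid g$ immediately; part~(3) of Lemma~\ref{Lemma L2} is not needed for this step.
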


\begin{proof}
Let $r$ be a rigid element and $x$ be any nonzero non unit of $D.$ By
Proposition \ref{Proposition P1A} $x=x_{1}x_{2}...x_{n}$ where $x_{i}$ are
mutually $v$-coprime rigid elements. Two cases arise: (a) $r$ is $v$-coprime
to each of $x_{i}$ and (b) $r$ is non-$v$-coprime to one (and exactly one)
say $x_{1}$ of the $x_{i}.$ (By Lemma \ref{Lemma L2} $r$ cannot be non-$v$%
-coprime to more than one). Now in the presence of property $\ast ,$ $r|x_{1}
$ or $x_{1}|r.$ In case (a) $(r,x)_{v}=D$ and in case (b) if $r|x_{1}$ then $%
r|x$ and so $(r,x)_{v}=rD$. Finally if $x_{1}|r$, then since $(r,x_{i})_{v}=D
$ for $i=2,...,n,$ we must have $(r/x_{1},x_{i})_{v}=D$ for $i=2,...,n.$ But
that gives $(r/x_{1},x_{2}...x_{n})_{v}=D,$ forcing $(r,x)_{v}=x_{1}D.$
Throwing in the case when $x$ is a unit we conclude that if $r$ is a rigid
element and $x$ any nonzero element of a domain with property $\ast $, then $%
(r,x)_{v}=hD$ a principal ideal. We next show that $r$ is primal. For this
we let $r|xy$ for some $x,y\in D\backslash \{0\}.$ Letting $x=sx_{1}$ where $%
s$ is such that $(r,x)_{v}=sD$ we have $r=st$ where $(t,x_{1})_{v}=D.$ Now $%
st|sx_{1}y$ leads to $t|x_{1}y$, which in turn leads to $t|y$ because $%
(t,x_{1})_{v}=D.$ Whence $r=st$ where $s|x$ and $t|y.$ Since the choice of $%
x,y$ was arbitrary, we conclude that every rigid element $r$ of $D$ is
primal. Combine the information that products of primal element are primal (%
\cite{[Coh bezout]}) with the fact that $D$ is semirigid to conclude that $D$
is indeed pre-Schreier. Now in a pre-Schreier domain a rigid element is
homogeneous \cite[Theorem 2.3]{[AMZ]} and so belongs to a unique maximal $t$%
-ideal $P(r)=\{x\in D|$ $(r,x)_{v}\neq D\}.$ Once we have shown that every
rigid element of a semirigid domain $D$ is homogeneous we can conclude that $%
D$ is a HoFD. Next, in view of proof of (3) of Theorem 3.1 of \cite{[AMZ]},
one can say that if $D$ is a HoFD and $M$ a maximal $t$-ideal of $D$ and if $%
0\neq xD_{M}\subseteq MD_{M}$ then $xD_{M}\cap D=x^{\prime }D$ where $%
x^{\prime }$ is a homogeneous element contained in $M$ such that $%
xD_{M}=x^{\prime }D_{M}.$ So let $M$ be a maximal $t$-ideal of $D$ and let $%
0\neq hD_{M},kD_{M}\subseteq MD_{M}.$ Then we have $hD_{M}\cap D=h^{\prime }D
$ and $kD_{M}\cap D=k^{\prime }D$ where $h^{\prime }$ and $k^{\prime }$ are
homogenous elements contained in $M$ and hence similar. But in our domain $D$
with property $\ast ,$ $h^{\prime },k^{\prime }$similar means $h^{\prime
}D\subseteq k^{\prime }D$ or $k^{\prime }D\subseteq h^{\prime }D.$ Extending
these to $D_{M}$ we get $hD_{M}=$ $h^{\prime }D_{M}\subseteq k^{\prime
}D_{M}=kD_{M}$ or $kD_{M}=k^{\prime }D_{M}\subseteq h^{\prime }D_{M}=hD_{M}$%
. Thus every pair of principal ideals $hD_{M},kD_{M}$ of $D_{M}$ is
comparable and $D_{M}$ is a valuation domain. Now as the choice of $M$ was
arbitrary we conclude that $D_{M}$ is a valuation domain for every maximal $t
$-ideal of $D$ and $D$ is a PVMD. But a pre-Schreier PVMD is a GCD domain 
\cite[Corollary 1.5]{[BZ]}.
\end{proof}

\begin{corollary}
\label{Corollary C2} For an integral domain $D$ the following are equivalent.
\end{corollary}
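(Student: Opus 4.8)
The plan is to prove the listed conditions equivalent by closing a short cycle of implications, since almost all of the work has already been done in Theorem \ref{Theorem T1}, Proposition \ref{Proposition P1A}, and the uniqueness result of \cite{[Z semi]} quoted earlier. I expect the natural list to read roughly: (1) $D$ is a semirigid GCD domain; (2) $D$ is a semirigid domain satisfying $\ast$; (3) $D$ is semirigid and every nonzero non unit of $D$ is either rigid or uniquely a product of finitely many mutually $v$-coprime rigid elements; and possibly (4) $D$ is a GCD IRKT (equivalently, a semirigid weakly Matlis GCD domain). The value of having Theorem \ref{Theorem T1} and Proposition \ref{Proposition P1A} already on the table is that each of the principal arrows collapses to a one-line citation rather than a fresh argument.

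First I would record (2) $\Rightarrow$ (1), which is exactly Theorem \ref{Theorem T1}, and (2) $\Leftrightarrow$ (3), which is precisely Proposition \ref{Proposition P1A}, since in a semirigid domain property $\ast$ is by that proposition equivalent to the unique $v$-coprime rigid factorization in (3). The arrow that actually closes the loop is (1) $\Rightarrow$ (3). Here I would invoke \cite{[Z semi]}: in a GCD domain any element that is a finite product of rigid elements is uniquely a product of finitely many mutually \emph{coprime} rigid elements; and in a GCD domain coprimality and $v$-coprimality coincide, both amounting to $(a,b)_{v}=D$. Because $D$ is semirigid, every nonzero non unit is such a product, so (3) follows immediately. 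If an IRKT (or weakly Matlis) condition is included, I would attach it to (1) separately: the direction (1) $\Rightarrow$ (4) is Theorem 5 of \cite{[Z semi]} together with the observation that the property of being GCD is retained, and (4) $\Rightarrow$ (1) is Theorem B of \cite{[Z rigid]}; the introductory remark that a weakly Matlis GCD domain is an IRKT lets me route a weakly Matlis formulation through the same two arrows.

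The step I expect to carry whatever genuine weight there is, is (1) $\Rightarrow$ (3): I want to be sure the translation between ``coprime'' (as used in \cite{[Z semi]}) and ``$v$-coprime'' is airtight, and that the hypothesis of \cite{[Z semi]} really transfers to every nonzero non unit via semirigidity. Every other arrow is bookkeeping, and the only place the GCD hypothesis does essential work is in identifying coprime with $v$-coprime and in supplying the unique factorization; so that is where I would state the equivalence explicitly rather than gesture at it. Should the corollary list a condition not yet linked by Theorem \ref{Theorem T1} or Proposition \ref{Proposition P1A} (for instance a ``semirigid pre-Schreier'' characterization), the real obstacle would shift to connecting that condition, for which I would reuse the chain pre-Schreier $+$ PVMD $\Rightarrow$ GCD already exploited in the proof of Theorem \ref{Theorem T1}.
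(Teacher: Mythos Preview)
Your cycle for the conditions you guessed---semirigid with $\ast$, semirigid GCD, unique $v$-coprime rigid factorization, and GCD IRKT/weakly Matlis---is sound, and those arrows do collapse to citations of Theorem \ref{Theorem T1}, Proposition \ref{Proposition P1A}, \cite{[Z semi]}, and \cite{[Z rigid]} exactly as you say. The paper does use essentially this route for its conditions (1), (2), (5).

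The gap is one of coverage. The actual corollary has ten conditions, not three or four, and the ones you did not anticipate are the whole point: the paper's conditions (6)--(8) bring in the VFD and UVFD notions from \cite{[CR]} (GCD VFD; UVFD; VFD in which products of non-coprime valuation elements are valuation elements), condition (9) is a pre-Schreier formulation, and (10) is a PSP formulation. The equivalence of semirigid GCD with UVFD is the headline of the corollary. For these the paper uses a long chain (1)$\Rightarrow$(2)$\Rightarrow\cdots\Rightarrow$(9)$\Rightarrow$(1), together with a side branch (2)$\Rightarrow$(10)$\Rightarrow$(1). Several of these steps are not mere citations: (5)$\Rightarrow$(6) and (6)$\Rightarrow$(7) invoke specific results of \cite{[CR]} (Corollary 4.5 and Theorem 4.2), and (7)$\Rightarrow$(8) is a genuine argument inside a UVFD---one writes $uv=a_1\cdots a_n$ with the $a_i$ mutually coprime valuation elements, uses that a VFD is Schreier to split $u$ along the $a_i$, and then argues from rigidity of $u$ and $v$ and their non-coprimality that both must divide the same $a_i$, forcing $n=1$. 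Your final paragraph correctly flags that an unlisted condition would shift the burden; here that burden is substantial and your proposal, as it stands, does not discharge it.
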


(1) $D$ is a semirigid domain with property $\ast ,$

(2) $D$ is a semirigid GCD domain,

(3) $D$ is a semi homogeneous GCD domain,

(4) $D$ is a HoFD PVMD,

(5) $D$ is a weakly Matlis GCD domain,

(6) $D$ is a GCD VFD,

(7) $D$ is a UVFD,

(8) $D$ is a VFD such that product of every pair of non-coprime valuation
elements is again a valuation element,

(9) $D$ is a pre-Schreier semirigid domain such that products of pairs of
non-corime rigid elements are again rigid and

(10) $D$ is a semirigid PSP domain such that every element of $D$ is rigid
or is expressible as a product of mutually $v$-coprime rigid elements.

\begin{proof}
(1) $\Rightarrow $ (2). This follows from Theorem \ref{Theorem T1}.

(2) $\Rightarrow $ (3). Let $r$ be a rigid element of a GCD domain $D$ and
consider $P(r)=\{x\in D|(x,r)_{v}\neq D\}.$ By \cite[Lemma 1]{[Z semi]} $%
P(r) $ is a prime ideal. To see that $P(r)$ is a $t$-ideal note that because 
$D$ is a GCD domain, $(x,r)_{v}\neq D\Leftrightarrow x=a\rho $ where $\rho $
is a non unit factor of $r.$ Thus $x_{1},x_{2},...,x_{n}\in P(r)$ $%
\Rightarrow (x_{1},x_{2},...,x_{n})\subseteq (\rho _{1})$ where $\rho _{1}$
is a non unit factor of $r.$ But then, $x_{1},x_{2},...,x_{n}\in P(r)$ $%
\Rightarrow (x_{1},x_{2},...,x_{n})_{v}\subseteq P(r).$ Thus $P(r)$ is a $t$%
-ideal. Finally, let $M$ be a prime ideal properly containing $P(r)$ and let 
$y\in M\backslash P(r).$ By the definition of $P(r),$ $(y,r)_{v}=D.$ But
then $M_{t}=D$ and this shows that $P(r)$ is actually a maximal $t$-ideal.
Finally, using the definition, it can be easily established that for any
pair of rigid elements $r,s$ of $D,$ $P(r)=P(s)$ if and only if $r\symbol{126%
}s.$ Thus every rigid element, in a GCD domain $D,$ belongs to a unique
maximal $t$-ideal and hence is a homogeneous element. Consequently a
semirigid GCD domain is a semi homogeneous GCD domain.

(3) $\Rightarrow $ (4). Note that a semi homogeneous domain is a HoFD, by
Proposition \ref{Proposition P1} and, it is well known that, a GCD domain is
a PVMD.

(4) $\Rightarrow $ (5). Since a HoFD $D$ is a weakly Matlis domain with $%
Cl_{t}(D)=0$ \cite[Theorem 2.2]{[Ch]} or \cite[Theorem 3.4]{[AMZ]} and since
a PVMD $D$ with $Cl_{t}(D)=0$ is a GCD domain \cite[Proposition 2]{[B]}, we
conclude that a PVMD\ HoFD is a weakly Matlis GCD domain.

(5) $\Rightarrow $ (6). Note that Corollary 4.5 of \cite{[CR]} says that a
UVFD is a weakly Matlis GCD domain and as a UVFD is a VFD, in particular, we
have the conclusion.

(6) $\Rightarrow $ (7). This follows because, according to Theorem 4.2 of 
\cite{[CR]}, a PVMD VFD is a UVFD and so a GCD VFD is a UVFD.

(7) $\Rightarrow $ (8). Let $D$ be a UVFD then $D$ is a VFD. Take two
non-coprime valuation elements $u,v$ and using the fact that $uv$ is an
element of a UVFD write $uv=a_{1}a_{2}...a_{n}$ where $a_{i}$ are mutually
incomparable and hence mutually coprime. Now $u|a_{1}a_{2}...a_{n}$ implies
that $u=u_{1}u_{2}...u_{n}$ where $u_{i}|a_{i},$ because a VFD is Schreier 
\cite{[CR]}, see also Cohn \cite{[Coh bezout]}. We claim that exactly one of
the $u_{i}$ is non unit. For if say $u_{1}$ and $u_{2}$ are non units then
being factors of coprime elements $u_{1}$ and $u_{2}$ are incomparable and
this contradicts the fact that $u$ is a valuation element (cf. \cite[(2) of
Corollary 1.2]{[CR]}). So $u$ divides exactly one of the $a_{i}.$ Similarly $%
v$ divides exactly one of the $a_{i}.$ Next $u$ and $v$ cannot divide two
distinct $a_{i}$ for that would make $u,v$ coprime, which they are not. Now
suppose that $u|a_{1}$. Then $v=(a_{1}/u)a_{2}...a_{n}$ and $v$ cannot
divide any of $a_{2},...,a_{n}$ as that would make $v$ coprime with $u.$ So, 
$v$ must divide $(a_{1}/u).$ But then $1=(a_{1}/uv)a_{2}...a_{n},$ forcing $%
uv=a_{1}$ and forcing the conclusion that in the VFD $D$ the product of any
pair of non coprime valuation elements is again a valuation element.

(8) $\Rightarrow $ (9). Note that as each valuation element is rigid, a VFD
is semirigid. Since a VFD is Schreier we can say that $D$ is a pre-Schreier
semirigid domain. Also the product of two non coprime valuation elements
being a valuation element translates to the product of two non-$v$-coprime
rigid elements is rigid and that is the property $\ast .$

(9) $\Rightarrow $ (1). Follows directly, as in a pre-Schreier domain
non-coprime is the same as non-$v$coprime.

(2) $\Rightarrow $ (10). This follows directly.

(10) $\Rightarrow $ (1). Let $x=rs$ where $r,s$ are non-$v$-coprime rigid
elements. If $rs$ is not rigid, then $rs=t_{1}...t_{n}$ where $t_{i}$ are
mutually $v$-coprime rigid elements. Now $r$ cannot be non-$v$-coprime to
more than one of the $t_{i}.$ For if it were it would have non unit factors
common with $v$-coprime elements, forcing $r$ to be non rigid. Thus $r$
divides exactly one of the $t_{i}.$ Again $r$ and $s$ cannot divide two
distinct $t_{i}$ because then they would be $v$-coprime. This forces $rs$ to
divide exactly one of the $t_{i},$ leading to the conclusion that $D$ is a
semirigid domain in which the product of every pair of non-$v$-coprime rigid
elements is rigid.
\end{proof}

While Corollary \ref{Corollary C2} establishes that the older concept of
semirigid GCD domains of \cite{[Z semi]} is precisely the most recent
concept of UVFDs of \cite{[CR]}, it raises the following question.

Question \label{Question Q1} Must a Schreier semirigid domain be a semirigid
GCD domain?

This question becomes interesting in view of the fact that the authors of 
\cite{[CR]} ask a similar question: is a VFD a semirigid GCD domain?
(Actually they ask: Is a VFD a weakly Matlis GCD domain?) The other point of
interest is that, according to \cite{[Coh bezout]} an atomic Schreier domain
is a UFD. In fact, once we recall necessary terminology, we have the
following more general result.

\begin{proposition}
\label{Proposition P2} (cf., \cite{[AZ sch]}, Proposition 3.2). In an
integral domain with PSP property, every atom is a prime. Consequently an
atomic domain with PSP property is a UFD.
\end{proposition}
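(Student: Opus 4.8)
The plan is to reduce the UFD conclusion to the single assertion that every atom of $D$ is prime, and to organize that assertion around the interplay between being an atom and being primal. The reduction itself is the classical criterion: an atomic domain in which every atom is prime is a UFD. So the genuine content lies in the first sentence of the statement, and I would spend the work there, treating the ``consequently'' as a citation of the standard criterion.

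First I would extract primality of atoms from the PSP hypothesis. Let $a$ be an atom of $D$. Since $a$ is a nonzero non-unit, the PSP property expresses $a$ as a finite product of primal elements; but $a$ is irreducible, so after absorbing units all but one factor must be a unit, and $a$ is an associate of a single primal element. As primality is preserved under associates, $a$ is itself primal. Now suppose $a|bc$ with $b,c\in D\backslash \{0\}$. Primality of $a$ yields a factorization $a=rs$ with $r|b$ and $s|c$; since $a$ is an atom, one of $r,s$, say $r$, is a unit, so $s$ is an associate of $a$ and $s|c$ forces $a|c$ (and symmetrically $a|b$ in the other case). Hence $a$ is prime.

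With every atom prime, the second assertion is immediate: in an atomic domain each nonzero non-unit is a finite product of atoms, and since each atom is prime, the usual prime-by-prime cancellation matches any two such factorizations up to order and unit multiples, which is exactly the UFD condition. The main (and only mildly substantive) obstacle is the passage from PSP to primality of an atom, where the key point is the elementary observation that irreducibility collapses a product of primals to a single primal factor; once that is in place the two deductions above are routine. I would therefore present the argument in precisely this order: PSP $\Rightarrow$ atom primal $\Rightarrow$ atom prime $\Rightarrow$ (in the atomic case) UFD.
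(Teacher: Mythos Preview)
Your argument rests on a misreading of the hypothesis. In this paper (and in the cited source \cite{[AZ sch]}) the PSP property is \emph{not} the assertion that every nonzero non-unit is a finite product of primal elements. Rather, PSP stands for ``primitive implies super primitive'': a domain $D$ has PSP if every primitive polynomial over $D$ (one whose coefficients have no non-unit common factor) is super primitive (its coefficients are $v$-coprime). Equivalently, whenever finitely many elements $a_{0},\dots,a_{n}$ of $D$ have no non-unit common divisor, one has $(a_{0},\dots,a_{n})_{v}=D$. Your first step, ``the PSP property expresses $a$ as a finite product of primal elements,'' is therefore not available, and the chain PSP $\Rightarrow$ atom primal $\Rightarrow$ atom prime collapses at its first link.

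The intended argument (which the paper imports from \cite{[AZ sch]}) uses PSP in its correct sense. Let $a$ be an atom and suppose $a\mid bc$ with $a\nmid b$. Because $a$ is irreducible, any common divisor of $a$ and $b$ is either a unit or an associate of $a$; the latter is excluded by $a\nmid b$, so $a$ and $b$ have no non-unit common factor. PSP then gives $(a,b)_{v}=D$. Now the standard fact recalled in the introduction, that $(a,b)_{v}=D$ together with $a\mid bc$ forces $a\mid c$, yields primality of $a$. The ``consequently'' clause is, as you note, the classical criterion that an atomic domain with every atom prime is a UFD. So your overall architecture (reduce to ``every atom is prime,'' then cite the standard criterion) is fine; only the mechanism for extracting primality from PSP needs to be replaced.
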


Here a polynomial $f(X)=\sum_{i=0}^{i=n}a_{i}X^{i}$ is primitive if the
coefficients $a_{i}$ of $f$ have no non unit common factor and $f$ is super
primitive if the coefficients $a_{0},...,a_{n}$ are $v$-coprime. Also a
domain $D$ has the PSP property if every primitive polynomial is super
primitive. Now as was indicated in \cite{[AZ sch]} a domain with PSP
property is much more general than a pre-Schreier domain. Lest hopes run too
high, we hasten to offer the following example of a semirigid Schreier
domain in which the product of two rigid elements is not rigid.

\begin{example}
\label{Example E1} Let $%
\mathbb{Z}
$ denote the ring of integers, let $%
\mathbb{Q}
$ be the ring of rational numbers and let $X,Y$ be two indeterminates over $%
\mathbb{Q}
.$ Construct the two dimensional regular local ring $R=%
\mathbb{Q}
\lbrack \lbrack X,Y]]$ and for $p$ a prime element set $D=%
\mathbb{Z}
_{(p)}+(X,Y)%
\mathbb{Q}
\lbrack \lbrack X,Y]].$ This ring $D$ is a semirigid Schreier domain with
two rigid elements $X,Y$ such that $(X,Y)_{v}\neq D,$ yet $XY$ is not rigid.
\end{example}

Illustration: Indeed $D$ is a quasi local ring with maximal ideal principal
and of course $X$ and $Y$ are divisible by every power of $p.$ That $D$ is
integrally closed follows from the fact that $D\subseteq 
\mathbb{Q}
\lbrack \lbrack X,Y]]$ which is integrally closed, that $%
\mathbb{Z}
_{(p)}$ is integrally closed and that $X$ and $Y$ are divisible by powers of 
$p$. For $D$ being Schreier let $S$ be multiplicatively generated by $p.$
Then $S$ is generated by completely primal elements and $D_{S}$ is a UFD.
Hence $D$ is Schreier, by Cohn's Nagata type Theorem. Now look at $X.$ Every
factor of $X$ of the form $p^{r}$ or $X/p^{s}.$ So any pair of factors of $X$
is one of the forms: $(p^{r},p^{s}),(p^{r},X/p^{s}),(X/p^{r},X/p^{s}),$ $%
r,s\geq 0,$ and in each case one of them divides the other. Same with $Y.$
Now $(X,Y)_{v}\neq D,$ because $p|X,Y.$ So $X$ and $Y$ are non-$v$-coprime
rigid elements of $D.$ Yet $XY$ cannot be, because $X$ does not divide $Y.$
Finally, as $%
\mathbb{Q}
\lbrack \lbrack X,Y]]$ is a UFD with each prime an element $f(X,Y)$ such
that $f(0,0)=0$ we conclude that for each prime $f$ of $%
\mathbb{Q}
\lbrack \lbrack X,Y]],$ $f/p^{r}$ is rigid in $%
\mathbb{Z}
_{(p)}+(X,Y)%
\mathbb{Q}
\lbrack \lbrack X,Y]].$ But then a typical nonzero non unit element of $%
\mathbb{Z}
_{(p)}+(X,Y)%
\mathbb{Q}
\lbrack \lbrack X,Y]]$ is expressible as a finite product of rigid elements.

\begin{remark}
\label{Remark R1} The above example has often appeared, in various
capacities, in papers in which Dan Anderson and I have been coauthors, see
e.g. \cite[page 344]{[AMZ]}, \cite{[AZ splittin]}, though the current
application is much more elaborate and hence different. Also, as we shall
see in the next section that, the above example can have another
interpretation/application.
\end{remark}

It was shown in the proof of (2) $\Rightarrow $ (3) of Corollary \ref%
{Corollary C2} that a rigid element is a homogeneous element. However a
rigid element may not generally be a homogeneous element. For an atom is
rigid but, say, in a Krull domain with torsion divisor class group an atom
can be in more than one height one prime ideals which can be shown to be
maximal $t$-ideals. (For a concrete example $%
\mathbb{Z}
\lbrack \sqrt{-5}]$ is a Dedekind domain in which $3$ is well known to be an
irreducible element, but $(3)=(3,1-2\sqrt{-5})(3,1+2\sqrt{-5})$ where $(3,1-2%
\sqrt{-5}),((3,1+2\sqrt{-5})$ are height one prime ideals and hence maximal (%
$t$-) ideals of $%
\mathbb{Z}
\lbrack \sqrt{-5}].$ (Recall here that a Dedekind domain is a Prufer domain
and so every nonzero ideal of a Dedekind domain is a $t$-ideal.)

Call an integral domain $D$ a $t$-local domain if $D$ is quasi local with
maximal ideal $M$ a $t$-ideal, then $D$ is a HoFD in that every nonzero non
unit of $D$ is a homogeneous element and hence is uniquely expressible as a
product of mutually $t$-comaximal elements. Now every one dimensional local
ring being $t$-local is a HoFD and in view of this fact the following
proposition provides a valuable contrast.

\begin{proposition}
\label{Proposition P3} Let $D$ be an integral domain with each nonzero non
unit a rigid element. Then $D$ is a valuation domain.
\end{proposition}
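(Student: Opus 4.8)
The plan is to verify directly that $D$ satisfies one of the standard characterizations of a valuation domain: any two nonzero elements are comparable under divisibility (equivalently, for every $x\in K$ either $x\in D$ or $x^{-1}\in D$). So I would take two arbitrary nonzero elements $a,b\in D$ and aim to show $a\mid b$ or $b\mid a$.

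First I would dispose of the trivial case. If either $a$ or $b$ is a unit, then that element divides the other, so the pair is automatically comparable. This reduces the problem to the case where both $a$ and $b$ are nonzero non-units, which is exactly where the hypothesis has content.

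The key move is to apply the rigidity hypothesis not to $a$ or $b$ separately but to their product. Since $D$ is a domain and neither factor is a unit, $ab$ is again a nonzero non-unit, hence rigid by hypothesis. But $a\mid ab$ and $b\mid ab$, so the defining property of a rigid element, applied with $x=a$ and $y=b$, immediately forces $a\mid b$ or $b\mid a$.

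Consequently every pair of nonzero elements of $D$ is comparable under division, and $D$ is a valuation domain. The only points requiring any care are purely bookkeeping: checking that $ab$ really is a nonzero non-unit (so that the hypothesis genuinely applies to it, which uses that $D$ is a domain and that a product with a non-unit factor cannot be a unit), and recalling that divisibility-comparability of all pairs is equivalent to the valuation-domain condition. There is no substantive obstacle beyond these remarks; the whole content is the observation that $ab$ inherits rigidity and sees both $a$ and $b$ among its divisors.
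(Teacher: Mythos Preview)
Your proof is correct and follows essentially the same approach as the paper: both argue that for nonzero non-units $a,b$ the product $ab$ is again a nonzero non-unit, hence rigid, so $a\mid b$ or $b\mid a$. You are slightly more explicit in handling the unit case and in noting that $ab$ is a nonzero non-unit, but the substance is identical.
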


\begin{proof}
Let $x,y$ be two nonzero non units. Then $xy$ being a nonzero non unit, and
hence rigid, gives $x|y$ or $y|x.$ Thus for every pair of elements we have
one dividing the other.
\end{proof}

It appears that, very few restrictions other than the property $\ast $ will
make semirigid domains into GCD domains. For example a Krull domain is
atomic, and hence a semirigid domain, but not all Krull domains are UFDs. So
some products of rigid elements are not rigid. However the following
statement holds.

\begin{proposition}
\label{Proposition P4} An atomic domain $D$ is a UFD if and only if for
every pair of atoms $a,b$, $(a,b)_{v}\neq D$ implies that $ab$ is rigid.
\end{proposition}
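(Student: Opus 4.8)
The plan is to treat the two implications separately and, in both directions, to exploit the observation that in an atomic domain the hypothesis is really a statement about when two atoms fail to be $v$-coprime. The key reformulation I would record first is this: if $a,b$ are atoms and $ab$ is rigid, then since $a,b\mid ab$ rigidity forces $a\mid b$ or $b\mid a$, and as both are atoms this means $a$ and $b$ are associates. Hence the hypothesis ``$(a,b)_v\neq D\Rightarrow ab$ is rigid'' says precisely that any two non-associate atoms are $v$-coprime.

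For the ``if'' direction I would show directly that every atom is prime; then, $D$ being atomic, the classical fact that an atomic domain all of whose atoms are prime is a UFD (the principle underlying Proposition \ref{Proposition P2}) finishes the argument. So let $p$ be an atom with $p\mid xy$ for nonzero nonunits $x,y$ (the remaining cases being trivial), and suppose $p\nmid x$. Writing $x=q_1\cdots q_s$ as a product of atoms, I claim $(p,x)_v=D$. Indeed, if $(p,x)_v\neq D$ then by \cite[Proposition 2.2]{[Z v-cop]} we have $(p,q_i)_v\neq D$ for some $i$; the hypothesis then makes $pq_i$ rigid, whence $p$ and $q_i$ are associates by the reformulation above, and so $p\mid q_i\mid x$, contradicting $p\nmid x$. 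Thus $(p,x)_v=D$, and since $p\mid xy$ the property of $v$-coprimality recalled in the introduction (``$(a,b)_v=D$ and $a\mid bc$ imply $a\mid c$'', from \cite{[Z v-cop]}) yields $p\mid y$. Hence $p$ is prime.

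For the ``only if'' direction, assume $D$ is a UFD and let $a,b$ be atoms with $(a,b)_v\neq D$. In a UFD two atoms are $v$-coprime precisely when they are non-associate (they then share no prime divisor), so $(a,b)_v\neq D$ forces $a$ and $b$ to be associates. Then $ab$ is an associate of $a^2$, and $a^2$ is rigid because every factor of $a^2$ is, up to a unit, one of $1,a,a^2$, so any two of its factors are comparable. Therefore $ab$ is rigid.

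The proof is short; the one place to be careful is the ``if'' direction, where the reduction of $(p,x)_v\neq D$ to a single atomic factor via \cite[Proposition 2.2]{[Z v-cop]} and the passage from rigidity of $pq_i$ to associateness of $p$ and $q_i$ must be combined correctly. I would emphasize that this route deliberately avoids invoking Theorem \ref{Theorem T1}: rather than verifying property $\ast$ for all rigid elements and then deducing the GCD/PVMD structure, it is cleaner here to establish primality of the atoms directly from the two basic facts about $v$-coprimality quoted in the introduction.
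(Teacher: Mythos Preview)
Your proof is correct and follows essentially the same route as the paper: both arguments reformulate the hypothesis as ``non-associate atoms are $v$-coprime,'' then show every atom $p$ is prime by factoring $x$ into atoms and using \cite[Proposition 2.2]{[Z v-cop]} to conclude $p\nmid x\Rightarrow (p,x)_v=D$. Your ``only if'' direction is spelled out more fully than the paper's ``the converse is obvious,'' but the content is the same.
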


\begin{proof}
We show that if $D$ is atomic such that $(a,b)_{v}\neq D$ implies that $ab$
is rigid for every pair of atoms $a,b,$ then every atom is a prime. For this
take an atom $a$ and some other atom $x$. Note that if $(a,x)_{v}\neq D$
then $a$ and $x$ are associates because, by the condition, $ax$ is rigid and
so $a|x$ or $x|a.$ If $a|x$, then $x=ar$ and because $a$ is a non unit and $%
x $ being an atom $r$ must be a unit, by the definition of an atom.
Similarly if $x|a,$ we conclude that $x$ and $a$ are associates. Taking the
contrapositive we conclude that in an atomic domain with the given property,
if two atoms are non-associates then they are $v$-coprime. Now let $a$ be an
atom and let $x$ be any element in our domain. We claim that in $D,$ $a\nmid
x$ implies that $(a,x)_{v}=D.$ For if $x=x_{1}...x_{n}$ is any atomic
factorization of $x$ and $(a,x)_{v}\neq D$ then $(a,x_{i})_{v}\neq D$ for
some $i$. (Else if $x_{i}$ are all $v$-coprime to $a$, then so is their
product.)

Finally let $r,s$ be any two nonzero elements of $D$ and let $a|rs.$ Then $%
a|r$ or $a|s.$ For if $a\nmid r$ and $a\nmid s$, then $(a,r)_{v}=D$ and $%
(a,s)_{v}=D$ which forces $(a,rs)_{v}=D.$ So the atom that we picked is a
prime. Whence every atom of $D$ is a prime. But an atomic domain in which
every atom is a prime must be a UFD. The converse is obvious.
\end{proof}

Finally, note that if $h$ is a homogeneous element of an integral domain,
then every non unit factor $t$ of $h$ is in $M(h)$ the unique maximal $t$%
-ideal containing $h.$ Thus for each pair of non unit factors $u,v$ of a
homogeneous element $h$ we have $(h,q)_{v}\neq D.$ This leads to the
question: Call a nonzero non unit $q$ of an integral domain $D$ a
pre-homogeneous element if for every pair $r,s$ of non unit factors of $q$
we have $(r,s)_{v}\neq D.$ Must a pre-homogeneous element be homogeneous?

Generally the answer is no, as every rigid element is pre-homogeneous and a
rigid element may not be homogeneous. For example, as we have already
mentioned, an atom is rigid and an atom may belong to more than one maximal $%
t$-ideals. However in some integral domains a pre-homogeneous element may
well be homogeneous. The reason for introducing this new terminology, here,
is that the term "homogeneous" was used for an element, in \cite{[AMZ]}, in
the set up of pre-Schreier domains saying that $h$ is homogeneous if $h$ is
a non unit such that for all non-units $a,b|h$ we have $(a,b)_{v}\neq D.$ At
that time there was no clear concept of homogeneous ideals. This changed in 
\cite{[DZ]} and we called a proper (nonzero) $\ast $-finite $\ast $-ideal $A$
of $D$ homogeneous if $A$ is contained in a unique maximal $\ast $-ideal.
Now Chang \cite{[Ch]} calls an element $h$ a homogeneous element if $hD$ is
a ($t$-) homogeneous ideal and, as we have shown above, this new homogeneous
isn't the old homogeneous. There are two ways of dealing with the situation.
Laugh at Chang for getting a paper out of a name change or smooth things
over, by introducing some new terminology. I have decided to take the latter
approach.

\begin{proposition}
\label{Proposition P5} In a domain $D$ with PSP property, every
pre-homogeneous element is homogeneous.
\end{proposition}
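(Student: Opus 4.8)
The plan is to study the set $M(q)=\{x\in D\mid (x,q)_{v}\neq D\}$ and to prove that, under the two hypotheses, it is a maximal $t$-ideal; this is exactly what it means for $q$ to be homogeneous, since $q\in M(q)$ and, for \emph{any} maximal $t$-ideal $N$ containing $q$, one has $N\subseteq M(q)$ (for $x\in N$ gives $(x,q)\subseteq N$, hence $(x,q)_{v}=(x,q)_{t}\subseteq N\neq D$). So if $M(q)$ is a maximal $t$-ideal it is forced to be the unique one containing $q$. First I would record the cheap properties of $M(q)$: it is proper, because $(1,q)_{v}=D$, and it is closed under multiplication by $D$, because $(dx,q)\subseteq (x,q)$ gives $(dx,q)_{v}\subseteq (x,q)_{v}\neq D$. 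In particular $M(q)$ is precisely the union of all maximal $t$-ideals containing $q$.

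The engine of the argument is the contrapositive of the PSP property read on two-term polynomials: if $(a,b)_{v}\neq D$, then the polynomial $a+bX$, having coefficients that are not $v$-coprime, is not super primitive, hence (by PSP) not primitive, so $a$ and $b$ admit a common non-unit factor. Applying this with $b=q$ shows at once that every $x\in M(q)$ shares a non-unit factor with $q$. The main step is then to show, by induction, that any finite family $x_{1},\dots,x_{n}\in M(q)$ has a common non-unit factor that also divides $q$. The base case is the observation just made. For the inductive step, let $d$ be a common non-unit divisor of $x_{1},\dots,x_{n}$ with $d\mid q$, and let $d'$ be a common non-unit divisor of $x_{n+1}$ and $q$. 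Here pre-homogeneity enters: $d$ and $d'$ are two non-unit factors of $q$, so $(d,d')_{v}\neq D$ by hypothesis, and PSP again yields a common non-unit factor $e$ of $d$ and $d'$. Since $e\mid d\mid x_{i}$ for $i\le n$, $e\mid d'\mid x_{n+1}$, and $e\mid d\mid q$, the element $e$ is the required common non-unit divisor.

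From this lemma I conclude that $M(q)$ is a $t$-ideal. Indeed, given a finitely generated $F=(x_{1},\dots,x_{n})\subseteq M(q)$ with common non-unit divisor $e\mid q$, every $z\in F_{v}$ satisfies $z\in eD$, so $(z,q)\subseteq eD\neq D$ and $z\in M(q)$; in particular $M(q)$ is closed under addition and $F_{v}\subseteq M(q)$, so $M(q)$ is a proper integral $t$-ideal. Being a proper $t$-ideal, it is contained in some maximal $t$-ideal $N$; but $q\in M(q)\subseteq N$ makes $N$ one of the maximal $t$-ideals that sit inside $M(q)$, whence $N=M(q)$. Thus $M(q)$ is itself the unique maximal $t$-ideal containing $q$, and $q$ is homogeneous.

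The main obstacle is precisely the passage from ``$v$-non-coprime'' to ``shares a genuine common non-unit factor'': $v$-coprimality is an ideal-theoretic condition that by itself says nothing about common divisors, and it is exactly the PSP hypothesis that bridges this gap (and the only place it is needed). The one further delicacy is that $q$ could a priori lie in infinitely many maximal $t$-ideals, so a finite prime-avoidance argument on the union $\bigcup M_{\alpha}=M(q)$ is unavailable; this is why I route the conclusion through the $t$-ideal property of $M(q)$, obtained from the finite common-divisor lemma, rather than arguing directly with the union of maximal $t$-ideals.
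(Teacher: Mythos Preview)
Your argument is correct. The paper's proof is different in shape: it argues by contradiction, assuming $q$ lies in two distinct maximal $t$-ideals $M_{1},M_{2}$, picks $m_{i}\in M_{i}\setminus M_{j}$ and finitely generated $F_{i}\subseteq M_{i}$ with $(m_{1},F_{2})_{t}=(m_{2},F_{1})_{t}=D$, and then applies PSP to the full finitely generated ideals $(q,F_{1},m_{1})\subseteq M_{1}$ and $(q,F_{2},m_{2})\subseteq M_{2}$ to produce non-unit factors $r,s$ of $q$ with $(r,s)_{t}\supseteq(q,F_{1},F_{2},m_{1},m_{2})_{t}=D$, contradicting pre-homogeneity. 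Your route is more constructive: you prove directly that $M(q)$ is a $t$-ideal via the inductive common-divisor lemma, and you only ever invoke PSP on \emph{pairs} (linear polynomials $a+bX$), whereas the paper applies it to larger generating sets. The inductive lemma is a pleasant strengthening---it shows that in a PSP domain any finite subset of $M(q)$ shares a non-unit factor with $q$---and it makes transparent why $M(q)$ is closed under $F\mapsto F_{v}$; the paper's argument is shorter but more ad hoc, trading that structural information for a quick contradiction.
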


\begin{proof}
Let $q$ be a pre-homogeneous element of the PSP domain $D.$ Suppose that $q$
is not homogeneous. Then there are at least two maximal $t$-ideals $%
M_{1},M_{2}$ containing $q.$ Let $m_{1}\in M_{1}\backslash M_{2}$ and $%
m_{2}\in M_{2}\backslash M_{1}.$ Then $(m_{1},M_{2})_{t}=D$ and $%
(m_{2},M_{1})_{t}=D.$ We can write $(m_{1},M_{2})_{t}=(m_{1},F_{2})_{t}$
where $F_{2}\subseteq M_{2}$ and similarly $%
(m_{2},M_{1})_{t}=(m_{2},F_{1})_{t}$ where $F_{1}$ is a finitely generated
ideal contained in $M_{1}.$ Set $G=(q,F_{1},F_{2},m_{1},m_{2}).$ Now $%
(q,F_{1},m_{1})\subseteq M_{1}$, so $(q,F_{1},m_{1})_{t}\neq D.$ Since $D$
is a PSP domain, $(q,F_{1},m_{1})_{t}\neq D$ means that there is a non unit $%
r$ such that $(q,F_{1},m_{1})\subseteq rD.$ Similarly we can find a nonzero
non unit $s$ in $D$ such that $(q,F_{2},m_{2})\subseteq sD$, because $%
(q,F_{2},m_{2})\subseteq M_{2}.$ Thus $(q,F_{1},F_{2},m_{1},m_{2})\subseteq
(r,s).$ Yet, $(q,F_{1},F_{2},m_{1},m_{2})_{t}=D.$ Whence $(r,s)_{t}=D$ a
contradiction to the assumption that $q$ is pre-homogeneous. Since this
contradiction arises from the assumption that $q$ is contained in more than
one maximal $t$-ideals the conclusion follows.
\end{proof}

\begin{corollary}
\label{Corollary C3} A PSP domain whose nonzero non units are expressible as
finite products of pre-homogeneous elements is a HoFD.
\end{corollary}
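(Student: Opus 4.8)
The plan is to chain the two preceding results, Proposition \ref{Proposition P5} and Proposition \ref{Proposition P1}, so that the corollary falls out almost immediately. The hypothesis gives us two facts: $D$ has the PSP property, and every nonzero non unit of $D$ admits \emph{some} factorization into finitely many pre-homogeneous elements. The goal is to upgrade such a factorization into one whose factors are genuinely homogeneous, and then invoke the characterization of HoFDs as exactly the semi homogeneous domains.

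Concretely, I would start by fixing an arbitrary nonzero non unit $x \in D$ and writing $x = q_1 q_2 \cdots q_n$ with each $q_i$ a pre-homogeneous element, which is possible by hypothesis. Since $D$ is assumed to have the PSP property, Proposition \ref{Proposition P5} applies to each factor individually and tells us that each pre-homogeneous $q_i$ is in fact a homogeneous element of $D$. Thus $x = q_1 q_2 \cdots q_n$ is already a finite product of homogeneous elements. As $x$ was arbitrary, this says precisely that every nonzero non unit of $D$ is a finite product of homogeneous elements, i.e., $D$ is a semi homogeneous domain.

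To finish, I would apply Proposition \ref{Proposition P1}, which states that an integral domain is a HoFD if and only if it is semi homogeneous. Having just shown $D$ is semi homogeneous, we conclude $D$ is a HoFD, as claimed. (Implicitly, Proposition \ref{Proposition P1} also repackages the factorization into one by mutually $t$-comaximal homogeneous elements via Lemma \ref{Lemma L1}, but that step is already subsumed in the cited equivalence, so no further work is needed here.)

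There is essentially no obstacle to overcome: all of the substantive content lives in Proposition \ref{Proposition P5}, where the PSP property is used to rule out a pre-homogeneous element sitting in two distinct maximal $t$-ideals, and in Proposition \ref{Proposition P1}, where semi homogeneity is converted into the HoFD factorization. The only point requiring any care is to observe that Proposition \ref{Proposition P5} may be applied factor-by-factor, which is legitimate because the PSP hypothesis is a global property of $D$ and each $q_i$ is a pre-homogeneous element of the same domain $D$.
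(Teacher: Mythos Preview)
Your proposal is correct and follows exactly the route the paper intends: the corollary is stated without proof because it is meant to fall out of Proposition~\ref{Proposition P5} (pre-homogeneous $\Rightarrow$ homogeneous under PSP) together with Proposition~\ref{Proposition P1} (semi homogeneous $\Leftrightarrow$ HoFD), which is precisely the chain you spell out.
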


Now as we have seen, a rigid element is pre-homogeneous. we have the
following result.

\begin{corollary}
\label{Corollary C4} A semirigid pre-Schreier domain is a HOFD and
consequently a VFD is a HoFD.
\end{corollary}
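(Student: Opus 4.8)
The plan is to derive this directly from Corollary~\ref{Corollary C3}, since nearly all the work has already been done. The two essential observations are: first, every rigid element is pre-homogeneous, because if $r$ is rigid then any two non-unit factors $a,b$ of $r$ satisfy $a\mid b$ or $b\mid a$, so they share a non-unit common factor and hence $(a,b)_{v}\neq D$; second, a pre-Schreier domain enjoys the PSP property, since (as noted in the text preceding Proposition~\ref{Proposition P2}) the PSP condition is weaker than the pre-Schreier condition.

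First I would take $D$ to be a semirigid pre-Schreier domain. Because $D$ is pre-Schreier it has the PSP property. Because $D$ is semirigid, every nonzero non-unit $x$ of $D$ can be written as a finite product $x=r_{1}r_{2}\cdots r_{n}$ of rigid elements, and each $r_{i}$ is pre-homogeneous by the observation above. Hence every nonzero non-unit of $D$ is a finite product of pre-homogeneous elements in a PSP domain, and Corollary~\ref{Corollary C3} immediately yields that $D$ is a HoFD.

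For the second assertion I would check that a VFD falls under the hypotheses of the first part. A VFD is by definition a domain in which every nonzero non-unit is a finite product of valuation elements; since each valuation element is rigid (by \cite[Corollary 1.2]{[CR]}), a VFD is semirigid. Moreover a VFD is Schreier, hence in particular pre-Schreier. Thus a VFD is a semirigid pre-Schreier domain, and the first part of the corollary applies to give that it is a HoFD.

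There is no real obstacle here: the corollary is simply a packaging of Proposition~\ref{Proposition P5} (via Corollary~\ref{Corollary C3}) together with two facts already recorded in the text, namely that rigid elements are pre-homogeneous and that pre-Schreier (and a fortiori Schreier) domains satisfy PSP. The only point requiring a moment's care is confirming that the pre-homogeneity of rigid elements and the implication from pre-Schreier to PSP are exactly the hypotheses demanded by Corollary~\ref{Corollary C3}; once those are lined up the conclusion is immediate.
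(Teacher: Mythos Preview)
Your proof is correct and follows essentially the same route as the paper. The paper argues that pre-Schreier implies PSP, that in a PSP domain rigid elements are homogeneous (via Proposition~\ref{Proposition P5}), and then invokes Proposition~\ref{Proposition P1}; you reach the same endpoint by citing Corollary~\ref{Corollary C3}, which is precisely the packaging of those two steps, and your treatment of the VFD case is identical in spirit.
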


The proof depends upon the fact that a pre-Schreier domain is PSP, as we
have already seen. Moreover in, a PSP domain and hence, in a pre-Schreier
domain a rigid element is homogeneous. Now use Proposition \ref{Proposition
P1}. Note here that what I have to call pre-homogeneous was once called
homogeneous and according to Theorem 2.3 of \cite{[AMZ]}, a completely
primal pre-homogeneous element is homogeneous.

Yet having established that a VFD is actually a HoFD\ does not change much.
For example we do have an example, in Example \ref{Example E1}, of a
semirigid pre-Schreier domain that is also a HoFD and being a HoFD has no
extra effect on it.

\section{Semi packed domains}

Consider the following conditions satisfied by a nonzero non unit $q$ of $D$:

(a) Call $q$ power rigid if $q$ is a rigid element such that every positive
integral power of $q$ is rigid,

(b) Call $q$ tenacious if $q$ is a completely primal power rigid element.

(d) Call $q$ prime quantum if $q$ is a tenacious element such that for every
non unit factor $h$ of $q$ there is a natural number $n$ with $q|h^{n}.$

(e) Call $q$ a packed element if $q$ is a power rigid element and a packet
i.e.$\sqrt{(q)}$ is a prime ideal.

(f) Call $q$ a t-packed element if $q$ is a tenacious packed element.

Call a domain semi tenacious if every nonzero non-unit of $D$ is expressible
as a finite product of tenacious elements and call $D$ semi packed (resp.,
semi t-packed) if every nonzero non unit of $D$ is expressible as a finite
product of packed (t-packed) elements. Let's adopt the convention of using $%
\sqrt{q}$ to indicate $\sqrt{(q)}$ and of calling a minimal prime of $(x)$ a
minimal prime of $x$. As before, we will call $x,y$ comparable if $x|y$ or $%
y|x.$ With this preparation we begin work on developing the theory of SPDs.

Let's start with the observation that if $q$ is a packed element and $r$ a
non unit factor of $q$, then $r$ is at least a power rigid element.

\begin{lemma}
\label{Lemma L3} Let $q$ be a packed element and let $r$ be a nonzero non
unit such that $\sqrt{q}\subseteq \sqrt{r}.$ Then $r$ and $q$ are
comparable, i.e., $r|q$ or $q|r.$ Consequently, two packed elements $q$ and $%
r$ of a domain $D$ are comparable if and only if $\sqrt{q}$ and $\sqrt{r}$
are comparable.
\end{lemma}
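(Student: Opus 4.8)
The plan is to convert the radical inclusion $\sqrt{q}\subseteq\sqrt{r}$ into a divisibility statement and then exploit the fact that every power of $q$ is rigid. Since $q\in\sqrt{(q)}=\sqrt{q}\subseteq\sqrt{r}=\sqrt{(r)}$, there is a natural number $m$ with $q^{m}\in(r)$, i.e. $r\mid q^{m}$. The reason the definition of a packed element insists that every power of $q$ be rigid, not merely $q$ itself, is exactly so that this power can be brought to bear: $r$ divides $q^{m}$, and $q$ also divides $q^{m}$ (as $m\geq 1$), so $r$ and $q$ are two divisors of the rigid element $q^{m}$. Rigidity of $q^{m}$ then forces $r\mid q$ or $q\mid r$, which is the first assertion.

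For the ``consequently'' clause I would argue the two implications separately, using that divisibility and radical containment go hand in hand. If $q$ and $r$ are comparable, say $q\mid r$, then $(r)\subseteq(q)$ and hence $\sqrt{r}\subseteq\sqrt{q}$; symmetrically $r\mid q$ gives $\sqrt{q}\subseteq\sqrt{r}$. Thus comparability of the elements yields comparability of their radicals. Conversely, if $\sqrt{q}$ and $\sqrt{r}$ are comparable, then one of $\sqrt{q}\subseteq\sqrt{r}$ or $\sqrt{r}\subseteq\sqrt{q}$ holds; since both $q$ and $r$ are packed (in particular nonzero non units), the first part of the lemma applies in either orientation---in the first case directly, in the second with the roles of $q$ and $r$ interchanged---to give $q\mid r$ or $r\mid q$.

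There is no serious obstacle here; the whole argument rests on the single observation that $\sqrt{q}\subseteq\sqrt{r}$ produces a power $q^{m}$ lying in $(r)$, after which power rigidity does all the work. The only point that requires care is to notice that one genuinely needs a power of $q$ rather than $q$ itself, so the hypothesis that $q$ is \emph{power} rigid (built into being packed) is essential and cannot be weakened to mere rigidity of $q$.
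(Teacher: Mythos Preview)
Your proof is correct and follows the same route as the paper: convert $\sqrt{q}\subseteq\sqrt{r}$ into $r\mid q^{m}$ for some $m$, then use rigidity of $q^{m}$ (guaranteed by power rigidity of the packed element $q$) to force $r\mid q$ or $q\mid r$. The paper's treatment is terser---it dismisses the ``consequently'' part as obvious---but the substance is identical.
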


\begin{proof}
Note that $\sqrt{q}\subseteq \sqrt{r}$ implies that $r|q^{n}$ for some $n.$
But $q^{n}$ is rigid. Whence the comparability ($r|q$ or $q|r).$ The
consequently part is obvious.
\end{proof}

\begin{lemma}
\label{Lemma L4}Suppose that a finite product $p_{1}p_{2}...p_{n}$ of packed
elements $p_{1},...,p_{n}$, in a domain $D,$ is such that $\sqrt{%
p_{1}p_{2}...p_{n}}$ is a prime. Then $p_{1}p_{2}...p_{n}$ is packed and $%
p_{i}$ are mutually comparable. Consequently if any pair of $p_{i}$ is
incomparable then $\sqrt{p_{1}p_{2}...p_{n}}$ is not a prime.
\end{lemma}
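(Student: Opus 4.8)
The plan is to reduce everything to a single fact: every $p_j$ divides a common power of one distinguished factor. Once that is in place, all the $p_j$ become divisors of a single rigid element (giving mutual comparability), and $q:=p_1p_2\cdots p_n$ inherits power rigidity from that same element.

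First I would compute the radical of the product. Since $(p_1\cdots p_n)=(p_1)\cdots(p_n)$ as a product of principal ideals and $\sqrt{IJ}=\sqrt{I\cap J}=\sqrt I\cap\sqrt J$ in any commutative ring, one gets
\[
\sqrt{p_1p_2\cdots p_n}=\sqrt{p_1}\cap\sqrt{p_2}\cap\cdots\cap\sqrt{p_n}.
\]
Each $\sqrt{p_j}$ is prime because $p_j$ is packed, so $\sqrt q$ is a finite intersection of primes. The hypothesis that $\sqrt q$ is prime pins this intersection down: from $\prod_j\sqrt{p_j}\subseteq\bigcap_j\sqrt{p_j}=\sqrt q$ and primeness of $\sqrt q$ one gets $\sqrt{p_k}\subseteq\sqrt q$ for some $k$, and since also $\sqrt q\subseteq\sqrt{p_k}$ we obtain $\sqrt q=\sqrt{p_k}$, hence $\sqrt{p_k}\subseteq\sqrt{p_j}$ for every $j$.

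The key step is to convert these radical inclusions into honest divisibilities. From $\sqrt{p_k}\subseteq\sqrt{p_j}$ and $p_k\in\sqrt{p_k}$ we get $p_k\in\sqrt{p_j}$, i.e. $p_j\mid p_k^{\,m_j}$ for some $m_j\ge 1$; taking $N=\max_j m_j$ yields $p_j\mid p_k^{\,N}$ for all $j$. Because $p_k$ is packed, hence power rigid, the element $p_k^{\,N}$ is rigid, so any two of its divisors are comparable; as each $p_j\mid p_k^{\,N}$, the factors $p_1,\dots,p_n$ are mutually comparable. (This comparability can equally be read off the ``consequently'' clause of Lemma~\ref{Lemma L3}, but the divisibility route above is self-contained.)

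It remains to see that $q$ is packed. Its radical is prime by hypothesis, so I only need $q$ power rigid. From $p_j\mid p_k^{\,N}$ for all $j$ we get $q=\prod_j p_j\mid p_k^{\,nN}$, hence $q^{m}\mid p_k^{\,nNm}$ for every $m\ge1$; since $p_k^{\,nNm}$ is rigid (power rigidity of $p_k$), every divisor of $q^{m}$ divides a rigid element, so any two divisors of $q^{m}$ are comparable and each $q^{m}$ is rigid. Thus $q$ is power rigid, and being a packet as well it is packed. The final ``consequently'' assertion is merely the contrapositive of the comparability statement. The only genuine obstacle is the passage from radicals to divisibility in the third paragraph; the rest is bookkeeping, and the essential leverage is that a packed element has \emph{all} of its powers rigid — exactly what lets the single rigid element $p_k^{\,nNm}$ govern all divisors of $q^{m}$ at once.
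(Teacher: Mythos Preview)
Your proof is correct and follows essentially the same route as the paper: identify a distinguished index $k$ with $\sqrt{q}=\sqrt{p_k}$, convert the resulting radical inclusions $\sqrt{p_k}\subseteq\sqrt{p_j}$ into divisibilities $p_j\mid p_k^{\,N}$, and then use power rigidity of $p_k$ to force both mutual comparability of the $p_j$ and power rigidity of $q$. The only cosmetic difference is that the paper reads off comparability of the $p_j$ from the rigidity of $q$ itself (each $p_j$ divides $q$), whereas you read it off from the rigidity of $p_k^{\,N}$; both are the same idea applied to a different common rigid multiple.
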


\begin{proof}
Note that $\sqrt{p_{1}p_{2}...p_{n}}=\cap \sqrt{p_{i}}=\sqrt{p_{j}}$ for
some $j,$ because $\sqrt{p_{1}p_{2}...p_{n}}$ is a prime. Next as $\sqrt{%
p_{j}}\subseteq \sqrt{p_{i}}$ for each $i$ we have $p_{i}|p_{j}^{n_{i}}$ for
some $n_{i}.$ Whence every power of $p_{1}...p_{n}$ divides a power of $%
p_{j},$ forcing $p_{1}...p_{n}$ to be power rigid. That $p_{i}$ are mutually
comparable follows from the fact that $p_{1}...p_{n}$ is power rigid. The
consequently part is obvious.
\end{proof}

\begin{lemma}
\label{Lemma L5}Let $a,b$ be two packed elements of a domain $D$. If $a,b$
are comparable then $ab$ is a packed element.
\end{lemma}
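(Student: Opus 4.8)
The plan is to let Lemma \ref{Lemma L4} do the heavy lifting. Recall that a packed element is by definition a power rigid element $q$ whose radical $\sqrt{q}$ is prime, so to prove $ab$ is packed I must check two things: that $ab$ is power rigid and that $\sqrt{ab}$ is prime. Lemma \ref{Lemma L4} already tells us that a finite product of packed elements whose radical is prime is itself packed (and, as a bonus, that the factors are mutually comparable). Applying it with $n=2$ to the packed elements $a$ and $b$, I see that the entire statement reduces to a single point: it suffices to show that $\sqrt{ab}$ is a prime ideal. Once that is established, packedness of $ab$ is immediate.

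To prove $\sqrt{ab}$ is prime I would exploit the hypothesis that $a$ and $b$ are comparable. Since $ab=ba$, by symmetry I may assume $a\mid b$, say $b=ac$. Then $ab=b\cdot a$ shows $b\mid ab$, while $b^{2}=b\cdot ac=(ab)c$ shows $ab\mid b^{2}$. Hence $(b^{2})\subseteq(ab)\subseteq(b)$, and passing to radicals gives $\sqrt{b}=\sqrt{b^{2}}\subseteq\sqrt{ab}\subseteq\sqrt{b}$, so that $\sqrt{ab}=\sqrt{b}$. As $b$ is packed, $\sqrt{b}$ is prime, and therefore $\sqrt{ab}$ is prime as well. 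Feeding this back into Lemma \ref{Lemma L4} yields that $ab$ is packed, which finishes the argument.

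I expect no real obstacle here, precisely because the structural work has already been isolated in Lemma \ref{Lemma L4}; the only hand computation is the radical identity, and that collapses to recording the two divisibilities $b\mid ab\mid b^{2}$. The one subtlety worth flagging is that packedness demands more than primality of the radical: it also requires power rigidity, i.e. that every power $(ab)^{n}=a^{n}b^{n}$ be rigid. Verifying this directly would be the delicate part, so I deliberately route the proof through Lemma \ref{Lemma L4} rather than attempting a head-on check of rigidity of all powers of $ab$.
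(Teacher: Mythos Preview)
Your argument is correct and essentially matches the paper's: both hinge on the divisibility $ab\mid b^{2}$ (assuming $a\mid b$) to identify $\sqrt{ab}$ with $\sqrt{b}$, and the paper uses that same divisibility to read off power rigidity directly, whereas you route that step through Lemma~\ref{Lemma L4}. The mathematical content is identical; only the packaging differs.
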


\begin{proof}
Since $a,b$ are comparable $ab|a^{2}$ or $ab|b^{2}.$ Thus $ab$ is power
rigid. Also if $(a)\subseteq (b),$ then $\sqrt{a}\subseteq \sqrt{b}$ and so $%
\sqrt{ab}=\sqrt{a}\cap \sqrt{b}.$ (Same for $(b)\subseteq (a).)$
\end{proof}

Observe that if $a$ and $b$ are packed elements such that $a|b,$ then $a$
divides some power of $b.$ So if $a$ divides no power of $b$, i.e. $\sqrt{b}%
\nsubseteq \sqrt{a}$ then $a\nmid b.$ Thus $a$ and $b$ are incomparable if
and only if $\sqrt{a}$ and $\sqrt{b}$ are incomparable. These observations
lead to the following conclusion.

\begin{proposition}
\label{Proposition P6} Let $D$ be an SPD. Then the following hold. (1).
Every nonzero non unit $x$ of $D$ is expressible as a product of mutually
incomparable packed elements, (2) If $x=x_{1}...x_{n}$ is a product of
mutually incomparable packed elements then the number of minimal primes of $%
x $ is precisely $n.$
\end{proposition}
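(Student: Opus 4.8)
The plan is to treat the two parts separately, leaning on Lemma \ref{Lemma L5} for (1) and on the ``consequently'' part of Lemma \ref{Lemma L3} for (2); no new structural input beyond these is needed.

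For part (1), I would argue by a greedy merging procedure. Since $D$ is an SPD, start from any factorization $x = p_{1}p_{2}\cdots p_{k}$ into packed elements. If the $p_{i}$ are already mutually incomparable we are done; otherwise some pair $p_{i},p_{j}$ is comparable, and by Lemma \ref{Lemma L5} the product $p_{i}p_{j}$ is again packed, so replacing $p_{i}$ and $p_{j}$ by $p_{i}p_{j}$ yields a factorization of $x$ into $k-1$ packed elements. The number of factors is a positive integer that strictly decreases at each such step, so the process terminates, and it can only terminate at a factorization in which no two factors are comparable. This gives $x$ as a product of mutually incomparable packed elements, with the degenerate case (the count dropping to one, so that $x$ itself is packed) being vacuously such a product.

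For part (2), write $x = x_{1}\cdots x_{n}$ with the $x_{i}$ mutually incomparable packed elements and set $P_{i}=\sqrt{x_{i}}$, which is prime because $x_{i}$ is packed. The first step is to note that $\{P_{1},\dots ,P_{n}\}$ is an antichain of $n$ distinct primes: by the ``consequently'' part of Lemma \ref{Lemma L3}, two packed elements are comparable exactly when their radicals are comparable, so the mutual incomparability of the $x_{i}$ forces the $P_{i}$ to be pairwise incomparable, hence pairwise distinct. The second step identifies the minimal primes of $x$. Since $\sqrt{x}=\sqrt{x_{1}\cdots x_{n}}=\bigcap_{i=1}^{n}P_{i}$, any minimal prime $\mathfrak{p}$ of $(x)$ contains $\bigcap_{i}P_{i}$; as this is a finite intersection of primes, $\mathfrak{p}$ must contain some $P_{j}$, and minimality forces $\mathfrak{p}=P_{j}$. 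Conversely each $P_{i}$ contains $x$ (because $x_{i}\mid x$), and the same containment argument together with the antichain property shows $P_{i}$ is itself minimal over $(x)$. Hence the set of minimal primes of $x$ is exactly $\{P_{1},\dots ,P_{n}\}$, of cardinality $n$.

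The routine verifications are light; the two points to get right are the termination-and-packedness bookkeeping in (1), which is immediate from Lemma \ref{Lemma L5}, and in (2) the standard fact that a prime containing a finite intersection of primes must contain one of them, which is precisely what converts the equality $\sqrt{x}=\bigcap_{i}P_{i}$ into an exact count. I expect the only genuine obstacle to be the clean two-directional identification in (2) of the minimal primes with the antichain $\{P_{i}\}$, rather than anything in (1).
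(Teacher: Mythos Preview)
Your proof is correct and follows essentially the same route as the paper's. The only cosmetic difference is in part (1): the paper groups all factors comparable to a chosen pivot and multiplies them at once (invoking Lemmas \ref{Lemma L4} and \ref{Lemma L5} together, as in the proof of Proposition \ref{Proposition P1A}), whereas you merge comparable pairs one at a time using Lemma \ref{Lemma L5} alone; part (2) is argued identically in both, via $\sqrt{x}=\bigcap_i \sqrt{x_i}$ and the standard fact that a prime containing a finite intersection of primes must contain one of them.
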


\begin{proof}
(1). Let $a=p_{1}p_{2}...p_{r}$ be a product of $r$ packed elements. Pick,
say, $p_{1}$ and collect all the factors $p_{i}$ comparable with $p_{1}.$
Using the above lemmas and proceeding exactly as in the proof of Proposition %
\ref{Proposition P1A} we can write $a=a_{1}...a_{n}$ where $a_{i}$ are
mutually incomparable.

(2). Let $x=x_{1}...x_{n}$ be a product of mutually incomparable packed
elements. Taking the radical of both sides we get $\sqrt{x}=\sqrt{%
x_{1}...x_{n}}=\sqrt{x_{1}}\cap \sqrt{x_{2}}\cap ...\cap \sqrt{x_{n}}%
\supseteq $ $\sqrt{x_{1}}\sqrt{x_{2}}...\sqrt{x_{n}}.$ Now any minimal prime
ideal $P$ of $x$ contains $x$ and hence $\sqrt{x_{1}}\sqrt{x_{2}}...\sqrt{%
x_{n}},$ where each of $\sqrt{x_{i}}$ is a prime containing $x.$ Being
minimal, $P$ cannot properly contain prime ideals containing $x.$ Whence $P=%
\sqrt{x_{i}}$ for some $i.$ Also as $P$ has to contain one of the $\sqrt{%
x_{i}}$ we conclude that the number of minimal primes of $x$ is precisely $n.
$
\end{proof}

Question: Is an SPD $D$ pre-Schreier?

To this point we have followed in the footsteps of \cite{[CR]} with good
results. It appears that the main property used in the proof of Proposition
2.1 of \cite{[CR]} is indeed that a valuation element is a packed element.
Thus we have the following result.

\begin{proposition}
\label{Proposition P7} (cf Proposition 2.1of \cite{[CR]}) A semi packed
domain $D$ is pre-Schreier and so an SPD is a semi t-packed domain.
\end{proposition}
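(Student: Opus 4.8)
The plan is to prove that an SPD $D$ is pre-Schreier by reducing the statement to a single divisibility fact about packed elements, exactly as was done for rigid elements in the proof of Theorem~\ref{Theorem T1}. Concretely, I would show that \emph{every packed element of $D$ is primal}. Granting this, the conclusion is immediate: by hypothesis every nonzero non unit $x$ of $D$ is a finite product of packed elements, products of primal elements are primal (\cite{[Coh bezout]}), and units are vacuously primal; hence every nonzero element of $D$ is primal and $D$ is pre-Schreier. So the entire weight of the proposition rests on the lemma ``packed $\Rightarrow$ primal,'' and this is precisely the point flagged before the statement: the proof of Proposition~2.1 of \cite{[CR]} establishes primality of valuation elements using only that such elements are packed, so I would transcribe that argument.

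To prove a packed element $q$ is primal, write $P=\sqrt{q}$ (a prime, since $q$ is a packet) and recall that every positive power of $q$ is rigid. The first structural observation I would record is that the divisors of the powers of $q$ form a \emph{chain} under divisibility: given $d\mid q^{m}$ and $d'\mid q^{m'}$, both divide $q^{N}$ for $N=\max(m,m')$, and $q^{N}$ is rigid, so $d$ and $d'$ are comparable. This totally ordered set plays the role that the ambient valuation ring plays in \cite{[CR]}. Now suppose $q\mid bc$ with $b,c$ nonzero. Since $bc\in(q)\subseteq P$ and $P$ is prime, at least one of $b,c$ lies in $P$, and I would use this together with the comparability furnished by Lemma~\ref{Lemma L3} (radical containment forces comparability with $q$) to locate, inside the chain above, the factor of $q$ ``supported on $b$,'' producing a factorization $q=st$ with $s\mid b$ and $t\mid c$. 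When exactly one of $b,c$ lies in $P$ this collapses to $q\mid b$ or $q\mid c$; the genuine work is the case $b,c\in P$.

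The main obstacle is precisely this extraction step. The common factors of $q$ and $b$ among the divisors of powers of $q$ form a totally ordered family, but in the absence of any ascending chain condition on principal ideals there need not be a \emph{largest} such common factor, so one cannot simply take a maximal common divisor and declare it $s$. Overcoming this is where the primeness of $\sqrt{q}$ and the comparability results of Lemmas~\ref{Lemma L3} and~\ref{Lemma L5} must be combined to force the splitting, following the mechanism of \cite{[CR]}; I expect this to be the only delicate part of the argument.

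Finally, the clause ``so an SPD is a semi t-packed domain'' follows formally once pre-Schreier is in hand. In a pre-Schreier domain every nonzero element is primal, and hence \emph{every} element is completely primal (each of its factors is again primal). Consequently each packed element of $D$, already power rigid and a packet, is automatically completely primal, hence tenacious, and therefore t-packed. Thus the very factorization of a nonzero non unit into packed elements guaranteed by the SPD hypothesis (and organized as in Proposition~\ref{Proposition P6}) is already a factorization into t-packed elements, so $D$ is semi t-packed.
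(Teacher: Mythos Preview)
Your proposal is correct and mirrors the paper's approach exactly: the paper itself gives no independent proof but simply observes that the argument of Proposition~2.1 in \cite{[CR]} uses only the packed property of valuation elements, and defers to that proof (even offering the alternative of simply \emph{assuming} packed elements are completely primal). Your sketch is in fact more detailed than what the paper provides, and your identification of the extraction step as the one delicate point---to be handled by transcribing \cite{[CR]}---is precisely the paper's stance.
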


Indeed, as remarked earlier, there is no need for a repeat proof. A reader
not quite sure about the proof in \cite{[CR]} may simply assume packed
elements to be tenacious, i.e., completely primal and that'd make an SPD
pre-Schreier. Because the observations that we want to make below depend on
the fact that, defined or proved, an SPD is pre-Schreier.

Having indicated that an SPD is closely related to VFDs it's pertinent to
look for other similarities.

\begin{proposition}
\label{Proposition P7A}A semi tenacious (resp., semi packed) domain is a
HoFD.
\end{proposition}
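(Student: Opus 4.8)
The plan is to reduce the whole statement to Corollary~\ref{Corollary C4}, which already asserts that a semirigid pre-Schreier domain is a HoFD. Thus it suffices to verify that a semi tenacious (resp., semi packed) domain is both semirigid and pre-Schreier. Semirigidity is immediate from the definitions in Section~3: every tenacious element and every packed element is by construction \emph{power rigid}, hence rigid. Therefore any nonzero non unit, being by hypothesis a finite product of tenacious (resp., packed) elements, is in particular a finite product of rigid elements, and the domain is semirigid.

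The pre-Schreier property is where the two cases diverge slightly. For a semi packed domain this is exactly Proposition~\ref{Proposition P7}, so no further work is required. For a semi tenacious domain $D$ I would argue directly: each tenacious element is by definition completely primal, hence primal (an element is a factor of itself, so complete primality forces primality). Every nonzero non unit of $D$ is a finite product of such elements, and since a product of primal elements is primal \cite{[Coh bezout]}, every nonzero non unit of $D$ is primal; that is, $D$ is pre-Schreier.

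With semirigidity and the pre-Schreier property in hand, Corollary~\ref{Corollary C4} yields that $D$ is a HoFD, completing the argument. Equivalently, one may bypass Corollary~\ref{Corollary C4} and note that in a pre-Schreier, hence PSP, domain every rigid element is homogeneous, so that every nonzero non unit is a finite product of homogeneous elements; the conclusion then follows from Proposition~\ref{Proposition P1}.

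I expect no serious obstacle here, since the entire argument is an assembly of facts already established. The one point requiring a line of care is the pre-Schreier claim in the semi tenacious case, because Proposition~\ref{Proposition P7} is stated only for semi packed domains; this gap is closed by the complete primality built into the definition of a tenacious element together with the closure of primal elements under products cited above.
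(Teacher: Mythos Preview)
Your proposal is correct and essentially matches the paper's argument. The paper first observes that an SPD is semi tenacious (via Proposition~\ref{Proposition P7}) and then handles only the semi tenacious case by invoking Proposition~\ref{Proposition P5} directly, while you treat the two cases in parallel and cite Corollary~\ref{Corollary C4}; since Corollary~\ref{Corollary C4} is itself proved via Proposition~\ref{Proposition P5} and Proposition~\ref{Proposition P1}, the underlying logic is identical, and your ``equivalently'' remark is in fact the paper's exact route.
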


\begin{proof}
Because an SPD is semi tenacious it suffices to study semi tenacious
domains. Now as a semi tenacious domain $D$ is pre-Schreier we conclude
using Proposition \ref{Proposition P5} that every rigid element and hence
every power rigid element of $D$ is homogeneous and consequently $D$ is a
HoFD.
\end{proof}

As already mentioned, an element $p$ in a domain $D$ is called a packet if $%
(p)$ has a unique minimal prime. That is $\sqrt{p}$ is a prime ideal. This
notion was introduced in \cite{[Z thesis]} while studying GCD rings of Krull
type. Obviously every nonzero non unit of a ring of Krull type has finitely
many minimal primes. It turned out that in a GCD domain $D$ a principal
ideal $xD$ has finitely many minimal primes if and only if $x$ can be
written as a finite product of packets. So a GCD domain was called a Unique
Representation Domain (URD), in \cite{[Z thesis]} if for each nonzero non
unit $x$ of $D$ the ideal $xD$ had finitely many minimal primes. GCD URDs
were presented in \cite{[Z URD]}. A more general study of URDs was carried
out in \cite{[EGZ]} where a general integral domain was called a URD, via 
\cite[Corollary 2.12]{[EGZ]}, if for every nonzero non unit $x$ of $D$ we
can (uniquely) write $xD=(X_{1}X_{2}...X_{n})_{t},$ where $X_{i}$ are
mutually $t$-comaximal $t$-invertible $t$-ideals with $\sqrt{X_{i}}$ prime.
Call $D$ a $t$-treed domain if the set of prime $t$-ideals of $D$ is a tree
under inclusion. Also, using \cite[Corollary 2.12]{[EGZ]}, we concluded that
a URD is $t$-treed.

\begin{lemma}
\label{Lemma L5A} Let $x$ be a nonzero non unit in an SPD. Then $x$ is a
packed element if and only if $x$ is a packet.
\end{lemma}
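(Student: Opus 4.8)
The statement is an equivalence, and one direction is immediate from the definitions: a packed element is, by definition (condition (e)), a power rigid element $q$ such that $\sqrt{(q)}$ is prime, and the latter condition is precisely what it means for $q$ to be a packet. So the plan is to dispose of the ``packed $\Rightarrow$ packet'' direction in a single line and concentrate on the converse, which is where the SPD hypothesis does the real work.

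For the converse, suppose $x$ is a packet, so that $\sqrt{x}$ is a prime ideal; equivalently, $x$ has exactly one minimal prime. Since $D$ is an SPD, I would invoke Proposition \ref{Proposition P6}(1) to write $x=x_{1}x_{2}\ldots x_{n}$ as a product of mutually incomparable packed elements. The point is then to pin down $n$: by Proposition \ref{Proposition P6}(2), the number of minimal primes of $x$ equals $n$. Comparing this with the fact that a packet has a unique minimal prime forces $n=1$, so $x=x_{1}$ is itself a single packed element, as desired.

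The entire argument is therefore a short bookkeeping exercise once Proposition \ref{Proposition P6} is in hand; there is essentially no analytic obstacle. The only point requiring a little care is making sure the two halves of Proposition \ref{Proposition P6} are applied in the right order: first the factorization into \emph{incomparable} packed elements (so that the count in part (2) is valid), and then the minimal-prime count to collapse the product to one factor. I would also remark explicitly that ``packet'' has been defined purely in terms of the radical $\sqrt{x}$, so no rigidity information is assumed at the outset on the converse side — the power rigidity of $x$ is recovered only after the product has been reduced to the single packed factor $x_{1}$. This is the conceptual content worth flagging: in an SPD the purely ideal-theoretic condition of having a single minimal prime already upgrades an element to a power rigid (hence packed) element.
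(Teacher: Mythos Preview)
Your proof is correct and follows essentially the same route as the paper: dispose of the forward direction by definition, then for the converse invoke Proposition~\ref{Proposition P6} to write $x$ as a product of mutually incomparable packed elements and use the minimal-prime count to force the product down to a single factor. The paper's version is slightly more terse (it phrases the converse as a proof by contradiction and does not cite Proposition~\ref{Proposition P6} explicitly), but the underlying argument is identical.
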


\begin{proof}
A packed element is a packet anyway. Conversely let $x$ be a packet and
suppose that $x$ is a not a packed element. Since $D$ is semi packed we must
have $x=p_{1}p_{2}...p_{r}$ where $p_{i}$ are mutually incomparable packed
elements of $D.$ But this is impossible unless $r=1,$ because $x$ has a
unique minimal prime.
\end{proof}

Recall that the notion of HoFD was called a semi $t$-pure domain in \cite%
{[AMZ]} not too long ago and, as already noted, that in view of proof of (3)
of Theorem 3.1 of \cite{[AMZ]} one can say that if $D$ is a HoFD and if $%
0\neq xD_{M}\subseteq MD_{M}$ then $xD_{M}\cap D=x^{\prime }D$ where $%
x^{\prime }$ is a homogeneous element contained in $M$ such that $x^{\prime
}D_{M}=xD_{M}.$

\begin{lemma}
\label{Lemma L6} Let $D$ be a HoFD. Then the following hold: (1) every
homogeneous element of $D$ is a packet if and only if $D$ is $t$-treed, (2)
if every homogeneous element of $D$ is a packed element then $D$ is a PVMD
and hence a GCD domain.
\end{lemma}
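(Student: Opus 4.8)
The plan is to prove (1) by relating the minimal primes of a homogeneous element to the prime $t$-ideals sitting below the maximal $t$-ideal that carries it, and then to derive (2) from (1) together with the comparability criterion for packed elements (Lemma \ref{Lemma L3}).

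For the ``if'' direction of (1), I would suppose $D$ is $t$-treed and take a homogeneous element $h$ with unique maximal $t$-ideal $M(h)$. Since $hD$ is principal it is a $t$-ideal, so every minimal prime of $hD$ is a prime $t$-ideal; each such minimal prime contains $h$ and hence lies inside $M(h)$. The minimal primes of $hD$ are pairwise incomparable, but in a $t$-treed domain the prime $t$-ideals contained in $M(h)$ form a chain, and a pairwise incomparable subfamily of a chain has at most one member. Thus $hD$ has a unique minimal prime, i.e. $\sqrt{h}$ is prime and $h$ is a packet.

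For the ``only if'' direction I would argue contrapositively. Assume $D$ is not $t$-treed; enlarging to a maximal $t$-ideal the prime $t$-ideal below which the tree condition fails, we get a maximal $t$-ideal $M$ containing two incomparable prime $t$-ideals $P_1,P_2$. Choose nonzero $a\in P_1\setminus P_2$ and $b\in P_2\setminus P_1$. Since $ab\in MD_M$ is a nonzero nonunit, the local structure of HoFDs \cite[Theorem 3.1]{[AMZ]} (recalled just before the lemma) yields a homogeneous $x'\in M$ with $x'D_M=abD_M$. The crux is to produce two distinct minimal primes of $x'$: a minimal prime $Q_1$ of $abD_M$ inside $P_1D_M$ cannot contain $b$ (else $b\in P_1D_M\cap D=P_1$), so $a\in Q_1$; symmetrically there is a minimal prime $Q_2\subseteq P_2D_M$ with $b\in Q_2$; and $Q_1\neq Q_2$, since $Q_1=Q_2$ would force $b\in P_1D_M$, hence $b\in P_1$. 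As the minimal primes of $x'$ all lie in $M(x')=M$, they correspond to the minimal primes of $x'D_M=abD_M$, so $x'$ has at least two minimal primes and fails to be a packet, contradicting the hypothesis. I expect this localization step to be the main obstacle, as it requires matching minimal primes of $x'D_M$ with those of $x'D$ and tracking $a,b$ through the contraction.

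For (2), note first that if every homogeneous element is packed then, in particular, it is a packet, so by (1) the domain $D$ is $t$-treed. Fix a maximal $t$-ideal $M$ and two nonzero nonunits of $D_M$; by the HoFD local structure they are generated by homogeneous elements $h,k\in M$ with $M(h)=M(k)=M$. By hypothesis $h,k$ are packed, so $\sqrt{h}$ and $\sqrt{k}$ are prime $t$-ideals contained in $M$; since $D$ is $t$-treed these radicals are comparable, and Lemma \ref{Lemma L3} then forces $h$ and $k$ to be comparable. Hence the two chosen principal ideals of $D_M$ are comparable, so $D_M$ is a valuation domain, and as $M$ was arbitrary, $D$ is a PVMD. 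Finally $D$ is a HoFD PVMD, hence a GCD domain by the equivalence (4)$\Leftrightarrow$(2) of Corollary \ref{Corollary C2} (equivalently, a HoFD has trivial $t$-class group and a PVMD with trivial $t$-class group is a GCD domain).
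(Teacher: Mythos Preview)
Your argument is correct, and for part (2) it is essentially the paper's own proof: use (1) to get $t$-treed, pull principal ideals of $D_M$ back to homogeneous elements via the HoFD local structure, compare radicals inside $M$, apply Lemma \ref{Lemma L3}, and conclude each $D_M$ is a valuation domain; the HoFD gives $Cl_t(D)=0$, so the PVMD is a GCD domain.

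The genuine difference is in the ``only if'' direction of (1). The paper does not argue contrapositively; instead it observes that in a HoFD every nonzero nonunit factors as $x=h_1\cdots h_r$ with mutually $t$-comaximal homogeneous $h_i$, and if each $h_i$ is a packet this is exactly the hypothesis of \cite[Corollary 2.12]{[EGZ]}, so $D$ is a URD and therefore $t$-treed. Your route bypasses the URD machinery entirely: from a failure of $t$-treed you produce a maximal $t$-ideal $M$ containing incomparable prime $t$-ideals $P_1,P_2$, manufacture a homogeneous element $x'$ with $x'D_M=abD_M$ via the HoFD local description, and then exhibit two distinct minimal primes of $x'D_M$, which pull back to two minimal primes of $x'$ because every minimal prime of a homogeneous element is a $t$-ideal contained in its unique maximal $t$-ideal. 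The localization bookkeeping you flag as the main obstacle is fine, since minimal primes of a principal ideal are $t$-ideals and $P_iD_M\cap D=P_i$ for $P_i\subseteq M$. The trade-off is that the paper's proof is a two-line appeal to \cite{[EGZ]}, while yours is self-contained and makes the geometric content (two incomparable $t$-primes below $M$ force a non-packet homogeneous element) explicit. Your ``if'' direction is just a fleshed-out version of the paper's one-sentence observation.
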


\begin{proof}
(1) By the proof of Proposition \ref{Proposition P1}, every nonzero non unit 
$x$ of a HoFD can be written as $x=h_{1}h_{2}...h_{r}$ where $h_{i}$ are
mutually $v$-coprime ($t$-comaximal) homogeneous elements. If each
homogeneous element is a packet then \cite[Corollary 2.12]{[EGZ]} applies,
so $D$ is a URD and hence $t$-treed. Conversely if $D$ is $t$-treed and a
HoFD then every homogeneous element of $D$ has a unique minimal prime. Next,
for (2), note that in a HoFD every nonzero non unit $x$ can be written as $%
x=x_{1}x_{2}...x_{n}$ where $x_{i}$ are mutually $t$-comaximal homogeneous.
Since each homogeneous element is packed and hence a packet, $D$ is $t$%
-treed by (1). Note that $D$ is a HoFD to start with. Thus if $M$ is a
maximal $t$-ideal of $D$ and $0\neq mD_{M}\subseteq MD_{M}$, then $%
mD_{M}\cap D=m^{\prime }D$ where $m^{\prime }$ is a homogeneous element of $%
D $ in $M$ such that $m^{\prime }D_{M}=mD_{M}$. Now let $0\neq
hD_{M},kD_{M}\subseteq MD_{M}.$ Then correspondingly $h^{\prime },k^{\prime
} $ are homogeneous elements belonging to $M$. Because $D$ is $t$-treed and
because homogeneous elements are packed we conclude that $\sqrt{h^{\prime }}$%
and$\sqrt{k^{\prime }}$ are comparable. But then so, by Lemma \ref{Lemma L3}%
, are $h^{\prime }$ and $k^{\prime }$ comparable in $D$ and consequently $%
h^{\prime }D_{M},k^{\prime }D_{M}$ in $D_{M}.$ Since $h^{\prime
}D_{M}=hD_{M} $ and $k^{\prime }D_{M}=kD_{M}$, we have $hD_{M}\subseteq
kD_{M}$ or $kD_{M}\subseteq hD_{M},$ forcing $D_{M}$ to be a valuation
domain. Now as $M$ is a typical maximal $t$-ideal, $D_{M}$ is a valuation
domain for each maximal $t$-ideal of $D.$ Thus $D$ is a PVMD. Again as $D$
is a HoFD, $Cl_{t}(D)=0$ and a PVMD $D$ with $Cl_{t}(D)=0$ is a GCD domain,
as noted in the introduction.
\end{proof}

\begin{theorem}
\label{Theorem T2} TFAE for a $t$-treed domain $D.$ (1) $D$ is a VFD, (2) $D$
is an SPD, (3) $D$ is a semirigid GCD domain, (4) $D[X]$ is a VFD and (5) $%
D[X]$ is semi packed.
\end{theorem}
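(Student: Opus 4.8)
The plan is to run the cycle $(3)\Rightarrow(1)\Rightarrow(2)\Rightarrow(3)$ first (the statements internal to $D$) and then attach the two polynomial conditions via $(3)\Rightarrow(4)\Rightarrow(5)\Rightarrow(2)$. The $t$-treed hypothesis will be used only in $(2)\Rightarrow(3)$ and in the final descent; the rest is ``global.'' For $(3)\Rightarrow(1)$, a semirigid GCD domain is a GCD VFD by the equivalence $(2)\Leftrightarrow(6)$ of Corollary \ref{Corollary C2}, hence a VFD. For $(1)\Rightarrow(2)$, every valuation element is a packed element, so a VFD is a finite-product-of-packed-elements domain, i.e.\ an SPD. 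The substantive implication is $(2)\Rightarrow(3)$: if $D$ is an SPD then $D$ is a HoFD by Proposition \ref{Proposition P7A}; since $D$ is $t$-treed, part (1) of Lemma \ref{Lemma L6} shows every homogeneous element of $D$ is a packet, and since $D$ is an SPD, Lemma \ref{Lemma L5A} upgrades ``packet'' to ``packed.'' Thus every homogeneous element is packed, so part (2) of Lemma \ref{Lemma L6} makes $D$ a PVMD and hence a GCD domain; being an SPD it is semirigid, so it is a semirigid GCD domain.

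For the ascent $(3)\Rightarrow(4)$ I would show that $D[X]$ is again a semirigid GCD domain and then apply $(3)\Rightarrow(1)$ to $D[X]$ (which needs no $t$-treed hypothesis). Since a semirigid GCD domain is a weakly Matlis GCD domain (Corollary \ref{Corollary C2}, $(2)\Leftrightarrow(5)$), it suffices to see that $D[X]$ is GCD---classical, as $D$ is GCD iff $D[X]$ is---and that $D[X]$ is weakly Matlis. For the latter I would use the standard description of the maximal $t$-ideals of $D[X]$ over the PVMD $D$: the extensions $P[X]$ of maximal $t$-ideals $P$ of $D$, together with the (height-one, principal) uppers to zero. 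Local finiteness follows from local finiteness in $D$ applied to the content of a polynomial plus the finitely many irreducible factors over $K$; the independence condition transfers because an upper to zero meets any $P[X]$ in $0$ (it contains a primitive polynomial), while two distinct $P_1[X], P_2[X]$ can share only a prime contracting into $P_1\cap P_2$ in $D$. Hence $D[X]$ is a weakly Matlis GCD domain, so a VFD. Then $(4)\Rightarrow(5)$ is immediate: a VFD is an SPD, i.e.\ $D[X]$ is semi packed.

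The descent $(5)\Rightarrow(2)$ is where I expect the real work. Two features pass cheaply from $D[X]$ down to $D$. First, $D[X]$ is pre-Schreier by Proposition \ref{Proposition P7}, and pre-Schreier descends by a degree count: if $a\in D$ and $a=rs$ in $D[X]$ with $r\mid b$, $s\mid c$ for $b,c\in D$, then comparing degrees forces $r,s\in D$ and the divisibilities already hold in $D$. Second, for $d\in D$ the minimal primes of $dD[X]$ are exactly the extensions $P[X]$ of the minimal primes $P$ of $dD$, and by Proposition \ref{Proposition P6} there are only finitely many of them; hence every principal ideal of $D$ has finitely many minimal primes, each a $t$-ideal. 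The difficulty is to convert these two facts, using the $t$-treed hypothesis, into an honest factorization of each $d\in D$ into packed elements of $D$.

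The crux, and the step I would scrutinize most, is obtaining the GCD property of $D$ from condition (5). The cleanest route I foresee is to prove that the $t$-treedness of $D$ passes up to $D[X]$, so that Lemma \ref{Lemma L6}(2) applies to the HoFD $D[X]$ and makes it a GCD domain, after which GCD descends to $D$. Once $D$ is GCD, $t$-treed, and has only finitely many minimal primes over each principal ideal, I would invoke the fact recalled from \cite{[Z thesis]} that in a GCD domain such a principal ideal is a finite product of packets, and upgrade each packet to a packed element using $t$-treedness and the comparability in Lemma \ref{Lemma L3} (comparability of radicals yielding comparability of the packets). This exhibits $D$ as an SPD, whereupon the already-proved $(2)\Rightarrow(3)$ closes the cycle. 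The transfer of $t$-treedness to $D[X]$ (equivalently, pinning down the prime $t$-ideals of $D[X]$ under hypothesis (5)) is the one place where I would not expect a one-line argument.
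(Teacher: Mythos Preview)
Your cycle $(3)\Rightarrow(1)\Rightarrow(2)\Rightarrow(3)$ and the ascent $(3)\Rightarrow(4)\Rightarrow(5)$ match the paper's arguments, with one cosmetic difference: for $(3)\Rightarrow(4)$ the paper simply cites \cite{[Z semi]} for the fact that $D$ semirigid GCD implies $D[X]$ semirigid GCD, rather than re-deriving the weakly Matlis structure of $D[X]$ as you do.

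The descent is where you and the paper genuinely diverge, and where your acknowledged gap sits. You propose to lift $t$-treedness from $D$ to $D[X]$, run Lemma~\ref{Lemma L6} on $D[X]$ to obtain $D[X]$ GCD, and then descend GCD to $D$. You rightly flag the lift as the sticking point; at this stage you do not know $D$ is a PVMD, so the prime $t$-ideal structure of $D[X]$ is not under control, and it is not clear how to push $t$-treedness upward from hypothesis (5) alone.

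The paper sidesteps this by descending everything to $D$ and applying the $t$-treed hypothesis only there. From $D[X]$ semi packed one has $D[X]$ pre-Schreier, which forces $D$ integrally closed by \cite{[ADZ]}, so $D$ is Schreier. Finite $t$-character descends from $D[X]$ to $D$ by \cite{[ACZ]}; independence of maximal $t$-ideals descends because, with $D$ integrally closed, the maximal $t$-ideals of $D[X]$ are exactly the uppers to zero and the $P[X]$ for $P\in t\text{-}Max(D)$, and the HoFD $D[X]$ allows no two of these to share a nonzero prime. Thus $D$ is weakly Matlis with $Cl_t(D)=0$, hence a HoFD. Now the $t$-treed hypothesis on $D$ together with finite $t$-character makes $D$ a URD via \cite{[EGZ]}, so each nonzero non-unit of $D$ is a product of mutually $t$-comaximal packets. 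Finally, a packet $p\in D$ remains a packet in $D[X]$, and since $D[X]$ is an SPD, Lemma~\ref{Lemma L5A} applied in $D[X]$ makes $p$ packed there, hence packed in $D$. So $D$ is an SPD, and the already-established $(2)\Rightarrow(3)$ closes the loop. The key manoeuvre is that the paper invokes Lemma~\ref{Lemma L5A} upstairs in $D[X]$ but Lemma~\ref{Lemma L6} only downstairs in $D$, so $t$-treedness never has to climb to $D[X]$.
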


\begin{proof}
(1) $\Rightarrow $ (2). This follows because a VFD is semi packed.

(2) $\Rightarrow $ (3). Note that an SPD is a HoFD and that in a HoFD every
nonzero non unit $x$ can be written as $x=x_{1}x_{2}...x_{n}$ where $x_{i}$
are mutually $t$-comaximal homogeneous. Now as $D$ is a HoFD being semi
packed and treed all homogenous elements are packets, by (1) of Lemma \ref%
{Lemma L6}. Also since $D$ is semi packed, every packet is a packed element
by Lemma \ref{Lemma L5A}. Whence by (2) of Lemma \ref{Lemma L6} $D$ is a GCD
domain. Now as a HoFD GCD domain is a GCD weakly Matlis domain which is a
semirigid domain.

(3) $\Rightarrow $ (4). This follows because if $D$ is a semirigid GCD
domain then so is $D[X]$ \cite{[Z semi]} and as indicated in the previous
section a semirigid GCD domain is a VFD.

(4) $\Rightarrow $ (5). This direct because a VFD is semi packed.

(5) $\Rightarrow $ (1). Since $D[X]$ being semi packed entails $D[X]$ being
pre-Schreier and $D[X]$ being pre-Schreier requires $D$ to be integrally
closed, by Corollary 10 of \cite{[ADZ]}. Thus $D$ is Schreier. Also $D[X]$
being of finite $t$-character forces $D$ to be of finite $t$-character, \cite%
[Corollary 3.3]{[ACZ]}. Again $D[X]$ being semi packed means ($D[X]$ is a
HoFD and so) no two maximal $t$-ideals of $D[X]$ contain a nonzero prime
ideal. But maximal $t$-ideals of $D[X]$ are either uppers to zero or of the
form $P[X]$ where $P$ is a maximal $t$-ideal. This leads to the conclusion
that between any two maximal $t$-ideals $P,Q$ of $D$ there is no nonzero
prime ideal. Combining this piece of information with $D$ being of finite $t$%
-character we conclude that $D$ is a weakly Matlis domain. Noting also that $%
Cl_{t}(D)=0$ because $D$ is Schreier, we conclude that $D$ is a HoFD.

Moreover $D$ being $t$-treed and of finite $t$-character forces $D$ to be a
URD with every nonzero non unit element a product of mutually $t$-comaximal
packets \cite[Corollary 2.12]{[EGZ]}. Now, all we need show is that each
packet of $D$ is a packed element. For this let $p$ be a packet of $D.$
Indeed $p$ is a packet in $D[X]$ and hence a packed element by Lemma \ref%
{Lemma L5A}, because $D[X]$ is semi packed. But then $p$ is a packed element
of $D.$ Thus $D$ is semi packed. Next using the steps taken in the proof of
(2) $\Rightarrow $ (3) we can show that $D$ is a GCD domain. Now $D$ is a
GCD domain and a GCD HoFD is a weakly Matlis GCD domain and hence a $t$%
-treed VFD.
\end{proof}

The above theorem shows just how close semi packed domains are to VFDs,
without being integrally closed. SPDs have at least one advantage over VFDs,
we have at least one example of a genuine semi packed domain that is not a
semirigid GCD domain.

\begin{example}
\label{Example E2} The ring $D=%
\mathbb{Z}
_{(p)}+(X,Y)%
\mathbb{Q}
\lbrack \lbrack X,Y]]$ in Example \ref{Example E1} is precisely an example
of an SPD.
\end{example}

Illustration: An element of $D$ is either $\epsilon p^{\alpha },$ where $%
\epsilon $ is a unit of $D$ and $\alpha $ a non-negative integer or of the
form $f/p^{\beta }$ where $f$ is a non unit of $%
\mathbb{Q}
\lbrack \lbrack X,Y]]$ and $\beta $ an integer. As in Example \ref{Example
E1} , it is easy to check that if $f$ is a prime of $%
\mathbb{Q}
\lbrack \lbrack X,Y]],$ pairs of factors of $($ $f/p^{\beta })^{n}$ are
comparable for each positive integer $n.$ Coupling this piece of information
with the fact that for $f$ a prime in $%
\mathbb{Q}
\lbrack \lbrack X,Y]],$ $($ $f/p^{\beta })(%
\mathbb{Q}
\lbrack \lbrack X,Y]])$ is a height one prime we conclude that $($ $%
f/p^{\beta })$ has a unique minimal prime and so is a packed element.
Finally as every nonzero non unit of $%
\mathbb{Q}
\lbrack \lbrack X,Y]]$ is of the form $(f_{1})^{n_{1}}...(f_{r})^{n_{r}}$,
where $f_{i}$ are primes and $n_{i}$ non-negative integers, we conclude that
every nonzero non unit of $D$ is of the form $\epsilon p^{\alpha
}(f_{1}/p^{\beta _{1}})^{n_{1}}...(f_{r}/p^{\beta _{1}})^{n_{r}},$ $\alpha
,\beta _{i},n_{i}\in 
\mathbb{Z}
,\alpha ,n_{i}\geq 0$ and hence a product of packed elements. Of course we
already know that $D$ is Schreier.

The above example establishes the existence of an SPD that is not a
semirigid GCD domain. (Or actually the existence of a HoFD in which not all
homogeneous elements are packed elements.) On the other hand, being
integrally closed, it raises the question: Is the ring in Example \ref%
{Example E2} a VFD? Then there is the question: When is a VFD (resp., semi
packed domain, semi tenacious domain) a semirigid GCD domain? We list in
propositions below some of the "off the cuff" answers to the question: When
is a semirigid domain a semirigid GCD domain. Some of these "answers" may
address the specific questions about VFDs etc.

\begin{theorem}
\label{Theorem T3}TFAE for an integral domain $D.$ (1) $D$ is a semi rigid
GCD domain, (2) $D$ is a GCD weakly Matlis domain, (3) $D$ is a PVMD weakly
Matlis domain with $Cl_{t}(D)=0,$ (4) $D$ is an independent ring of Krull
type with $Cl_{t}(D)=0,$ (5) $D$ is an SPD in which products of pairs of non
coprime packed elements are packed elements, (6) $D$ is an SPD whose
homogeneous elements are all packed elements, (7) $D$ is a semi packed URD,
(8) $D$ is a VFD URD, (9) $D$ is a pre-Schreier semirigid domain that is
also a URD in which $r$ is rigid $\Leftrightarrow $ $r$ is a packet, (10) $D$
is a PSP semirigid domain in which products of pairs of non coprime rigid
elements are again rigid, (11) $D$ is an SPD such that for every pair $x,y$
of packed elements we have $(x,y)_{v}=D$ or one or both of the following
hold $(x,y)_{v}=xD$ or $(x,y)_{v}=yD$, (12) $D$ is a semi rigid domain and
for every rigid element $r$ the following holds: For all $x\in D\backslash
\{0\},$ $(r,x)_{v}=hD$ where $h$ is a factor of $r$ and (13) $D$ is a VFD
such that every valuation element of $D$ has a GCD with every nonzero
element of $D.$
\end{theorem}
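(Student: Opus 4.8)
The plan is to treat condition (1), a semirigid GCD domain, as the hub and prove (1)$\Leftrightarrow$(k) for each remaining $k$, leaning on Corollary~\ref{Corollary C2}, which already identifies a semirigid GCD domain with a weakly Matlis GCD domain, a HoFD PVMD, a GCD VFD, a UVFD, and a semirigid domain with $\ast$. The ``classical'' block (1)--(4) is then immediate: (1)$\Leftrightarrow$(2) is the weakly Matlis GCD clause of Corollary~\ref{Corollary C2}; (2)$\Leftrightarrow$(3) and (3)$\Leftrightarrow$(4) follow from the introductory facts that a PVMD with $Cl_t(D)=0$ is a GCD domain, that a GCD domain is a PVMD, and that a weakly Matlis PVMD is precisely an IRKT. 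The one structural fact I would isolate first, and use repeatedly, is the bridge \textbf{(1)$\Rightarrow$ $D$ is a URD, hence $t$-treed}: in a weakly Matlis GCD domain every nonzero non unit is a product of $v$-coprime rigid homogeneous elements, each of which localizes at its unique maximal $t$-ideal to a power of a single prime of the valuation domain $D_M$, so each rigid element is power rigid with $\sqrt{r}$ prime, i.e.\ packed; thus $xD$ has finitely many minimal primes and $D$ is a URD, which is $t$-treed by \cite[Corollary 2.12]{[EGZ]}.

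Next I would dispatch the SPD/URD/VFD conditions (5),(6),(7),(8),(11) using Theorem~\ref{Theorem T2} together with Lemmas~\ref{Lemma L3}--\ref{Lemma L6} and~\ref{Lemma L5A}. Since a URD is $t$-treed and a VFD is semi packed, conditions (7) and (8) are each a $t$-treed SPD, so Theorem~\ref{Theorem T2} gives (7),(8)$\Leftrightarrow$(1), the converses using the bridge plus the VFD clause of Corollary~\ref{Corollary C2}. For (6), an SPD is a HoFD (Proposition~\ref{Proposition P7A}), so if all homogeneous elements are packed then Lemma~\ref{Lemma L6}(2) yields GCD, while (1)$\Rightarrow$$t$-treed lets Lemma~\ref{Lemma L6}(1) make homogeneous elements packets, which are packed by Lemma~\ref{Lemma L5A}. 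Conditions (5) and (11) say, respectively, that products of non-$v$-coprime packed elements are packed and that non-$v$-coprime packed elements are comparable; Lemmas~\ref{Lemma L4} and~\ref{Lemma L5} show these are equivalent, and in an SPD they force every homogeneous element (a product of incomparable packed factors lying in one maximal $t$-ideal, hence mutually similar) to be a single packed element, reducing (5),(11) to (6).

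For the Schreier/PSP conditions I would reuse the earlier machinery almost verbatim. Condition (10) is exactly ``semirigid $+$ $\ast$'' (with a harmless extra PSP hypothesis), so Theorem~\ref{Theorem T1} gives (10)$\Rightarrow$(1), and (1)$\Rightarrow$(10) since a semirigid GCD domain has $\ast$ and is pre-Schreier hence PSP. Condition (12) asserts precisely the intermediate identity $(r,x)_v=hD$ with $h\mid r$ that is established inside the proof of Theorem~\ref{Theorem T1}; running that proof onward from this identity (primality of $r$, then pre-Schreier, HoFD, PVMD, GCD) gives (12)$\Rightarrow$(1), and the same computation read backwards gives (1)$\Rightarrow$(12). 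For (9), a pre-Schreier semirigid domain is a HoFD (Corollary~\ref{Corollary C4}) and a URD is $t$-treed, so homogeneous elements are packets (Lemma~\ref{Lemma L6}(1)) hence rigid by the assumed equivalence rigid$\Leftrightarrow$packet; applying the same equivalence to the homogeneous elements $h^{n}$ shows $h$ is power rigid, so $h$ is a packed packet and Lemma~\ref{Lemma L6}(2) gives GCD, while the converse uses (1)$\Rightarrow$URD and that in a semirigid GCD domain a packet, being a product of $v$-coprime packets, must be a single rigid element.

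The step I expect to be the real obstacle is (13)$\Rightarrow$(1): a VFD in which every valuation element has a GCD with every nonzero element. Here I would argue that the GCD hypothesis forces any two non-$v$-coprime valuation elements $u,v$ to be comparable. Writing $u=d u_{1}$, $v=d v_{1}$ with $d=\gcd(u,v)$, the cofactors $u_{1},v_{1}$ are non-unit factors of rigid elements, hence lie in the common maximal $t$-ideal $M(u)=M(v)$, so $(u_{1},v_{1})_v\neq D$; but a VFD is Schreier, hence pre-Schreier, where non-$v$-coprime is the same as non-coprime (as used in Corollary~\ref{Corollary C2}), contradicting $\gcd(u_{1},v_{1})=1$ unless one cofactor is a unit, i.e.\ $u\mid v$ or $v\mid u$. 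Once comparability of non-$v$-coprime valuation elements is in hand, the argument of (11)$\Rightarrow$(6), or a direct check that each $D_M$ is a valuation domain, shows $D$ is a PVMD; being Schreier it has $Cl_t(D)=0$, so it is a GCD domain and hence, by Corollary~\ref{Corollary C2}, a semirigid GCD domain. The converse (1)$\Rightarrow$(13) is immediate, since a semirigid GCD domain is a VFD in which all pairs have GCDs. The delicate points are the clean reduction of the GCD hypothesis to comparability via the pre-Schreier identification of coprimality notions, and the passage from pairwise comparability of valuation elements to valuationhood of each localization $D_M$.
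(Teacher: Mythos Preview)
Your proposal is correct and broadly parallel to the paper's proof: the hub-and-spoke organization around (1), the use of Corollary~\ref{Corollary C2} for the block (1)--(4), the appeal to Lemma~\ref{Lemma L6} for (6), and the handling of (10) and (12) via Theorem~\ref{Theorem T1} all match. Your grouping of (5) and (11) via Lemmas~\ref{Lemma L4}--\ref{Lemma L5} and reduction to (6) is a tidy variant of what the paper does in two separate passes.

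There are two places where your route genuinely diverges. For (9)$\Rightarrow$(1) the paper gives a direct combinatorial argument that property~$\ast$ holds (writing $rs=p_1\cdots p_n$ in coprime packets and using primality to force $n=1$), whereas you instead push the hypothesis ``rigid $\Leftrightarrow$ packet'' through Lemma~\ref{Lemma L6} to get every homogeneous element packed and hence GCD; both work, and yours recycles more of the section's machinery. The more striking difference is (13): you flag it as ``the real obstacle'' and argue via comparability of non-$v$-coprime valuation elements (using the gcd to produce coprime cofactors that nonetheless sit in a common maximal $t$-ideal, a contradiction in a Schreier domain), then deduce PVMD and GCD. The paper bypasses all of this with a one-line reduction (13)$\Rightarrow$(12): since a VFD is Schreier, the existence of $\gcd(r,x)$ for a valuation (hence rigid) element $r$ immediately gives $(r,x)_v=hD$ with $h\mid r$, which is exactly condition (12). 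Your argument is sound, but the paper's shortcut is worth knowing---it shows that (13) is not an obstacle at all once (12) is in hand. One small wording issue: in your treatment of (10) you call the PSP hypothesis ``harmless extra,'' but in fact PSP is precisely what converts ``non-coprime'' in (10) into ``non-$v$-coprime'' so that property~$\ast$ applies; it is redundant only after (1) is established.
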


\begin{proof}
(1) $\Leftrightarrow $ (2). This is (1) $\Leftrightarrow $ (5) of Corollary %
\ref{Corollary C2}.

(2) $\Leftrightarrow $ (3). This is obvious because a PVMD $D$ with $%
Cl_{t}(D)=0$ is a GCD domain.

(1) $\Leftrightarrow $ (4). This follows from Corollary 3.8 of \cite{[AMZ]}.

(1) $\Leftrightarrow $ (5). It follows from Corollary \ref{Corollary C2} in
that an SPD is a semirigid domain.

(5)$\Rightarrow $ (6). This follows from the fact that two homogeneous
elements belonging to the same maximal $t$-ideal are non-coprime and packed
elements in an SPD are homogeneous.

(6) $\Rightarrow $ (2). This follows from using the fact that an SPD is a
HoFD and using (2) Lemma \ref{Lemma L6}. Now a GCD HoFD is a GCD weakly
Matlis domain.

(1) $\Rightarrow $ (7). For this note that a semirigid GCD domain is $t$%
-treed and of finite character and so must be a URD, also being a GCD domain
every rigid element in $D$ is power rigid and as we have shown that $D$ is a
URD every rigid element in $D$ is a packed element.

(7)$\Rightarrow $ (1). This follows from taking the following steps: (i)
noting that an SPD is a HoFD and that in an SPD a packet is a packed element
(Lemma \ref{Lemma L5A}) (ii) using (2) of Lemma \ref{Lemma L6} and (iii)
noting that a GCD\ HoFD is a "semi $t$-pure" GCD domain and hence a
semirigid GCD domain by Corollary 3.8 of \cite{[AMZ]}.

(1) $\Rightarrow $ (8). This follows from the fact that every rigid element $%
r$ in a semirigid GCD domain is a valuation element. (For the rigid element $%
r$ show as in the proof of (2) $\Rightarrow $ (3) of Corollary \ref%
{Corollary C2} that $P(r)=\{x\in D|(x,r)_{v}\neq D\}$ is a maximal $t$-ideal
of $D$ and establish that $rD_{P(r)}\cap D=rD,$ using the fact that $r$ is
homogeneous and that $D_{P(r)}$ is a valuation domain.) That a semirigid GCD
domain is a URD is now immediate. (8) $\Rightarrow $ (7). This is direct
since a VFD is semi packed.

(1) $\Rightarrow $ (9). This is direct since a semirigid GCD domain is a
semirigid pre-Schreier domain, and a URD in which $r$ is rigid $%
\Leftrightarrow $ $r$ is a packet.

(9) $\Rightarrow $ (1). This follows from the fact that $D$ is a
pre-Schreier domain in which every nonzero non unit is expressible as
product of mutually coprime packets each of which is rigid. (One may look at
the product of two non-coprime rigid elements $r,s.$ Then $%
rs=p_{1}p_{2}..,.p_{n}$ where $p_{i}$ are mutually coprime packets. Now $%
r|p_{1}p_{2}..,.p_{n}$ implies $r=r_{1}r_{2}..,.r_{n}$ where $r_{i}|p_{i}.$
But as $r$ is rigid and so cannot have mutually coprime non unit factors, we
conclude that $r|p_{i}$ for exactly one $i,$ say $r|p_{1}.$ Similarly $s$
divides exactly one of $p_{i}$ and that cannot be other than $p_{1}.$ So $s$
is coprime with $p_{2},...p_{n}.$ Now $s=(p_{1}/r)p_{2}...p_{n}$ and $%
s,p_{i} $ coprime for $i\geq 2$ forces $s$ to divide ($p_{1}/r)$. But then
it is easy to conclude that $rs=p_{1}.$ Thus in $D$ the property $\ast $
holds and \ Corollary \ref{Corollary C2} applies.)

(1) $\Rightarrow $ (10). This is direct as a semirigid GCD domain is a PSP
domain in which products of non-coprime rigid elements are rigid.

(10) $\Rightarrow $ (1). This follows because $D$ has PSP and so "products
pairs of non-coprime rigid elements being rigid" implies "products of pairs
of non-$v$-coprime rigid elements being rigid" which is property $\ast $ and
Corollary \ref{Corollary C2} applies.

(1) $\Rightarrow $ (11). This follows because in a semirigid GCD domain
every rigid element is a valuation element, as shown in the proof of (1) $%
\Rightarrow $ (8), and hence a packed element. Moreover for a pair of rigid
elements $r,s$ in a semirigid GCD domain $(r,s)_{v}=D$ or $r$ and $s$ are
comparable.

(11) $\Rightarrow $ (1). A semi packed domain with the given condition is
clearly a semirigid pre-Schreier domain in which every nonzero non unit is a
product of mutually coprime rigid elements and so the proof of (9) $%
\Rightarrow $ (1) applies.

(1) $\Rightarrow $ (12). This follows directly as a semirigid GCD domain is
a GCD domain.

(12) $\Rightarrow $ (1). (Use proof of Theorem \ref{Theorem T1} or proceed
as follows.) For this we first note that every rigid element $r$ of $D$ is
primal. This is because letting $r|xy$ we get $r=r_{1}r_{2}$ where $%
r_{1}=(r,x)_{v}.$ So $r=r_{1}r_{2}$ and $x=r_{1}x_{2}$ where $%
(r_{2},x_{2})_{v}=D.$ Now as $r_{2}|x_{2}y$, and $(r_{2},x_{2})_{v}=D,$ we
conclude that $r_{2}|y.$ Using the facts that $D$ is semirigid, that
products of primals are primal and that units are primal we conclude that
all nonzero elements of $D$ are primal and that $D$ is pre-Schreier. But
then every rigid element is homogeneous (i.e. belongs to a unique maximal $t$%
-ideal). Once that done, note that $(r,x)_{v}=(r,x)_{t}.$ Now with some
effort one can show that, in the terminology of \cite{[AZ starsh]}, every
rigid element of $D$ is $t$-f-homog and $D$ a $t$-f-Semi Homogeneous ($t$%
-f-SH) domain. Setting $\ast =t$ in Theorem 17 of \cite{[AZ starsh]} we
conclude that $D$ is a GCD independent ring of Krull type and hence a
semirigid GCD domain.

(1) $\Rightarrow $ (13). This is direct as a semirigid GCD domain is a VFD
with rigid elements being the valuation elements with all those properties.

(13) $\Rightarrow $ (12). This is obvious because in a VFD valuation
elements are rigid and every valuation element having GCD translates to $%
(r,x)_{v}=hD$ because a VFD is Schreier and in a Schreier domain coprime is $%
v$-coprime (Lemma 2.1 of \cite{[Z wb]}).
\end{proof}

While the jury is out on whether a VFD is a semirigid GCD domain or not, it
is patent that generally a semi tenacious domain is not a semirigid GCD
domain, nor is an SPD a semirigid GCD\ domain. However each collapses into a
semirigid GCD domain if $D$ is of $t$-dimension one, i.e., if every maximal $%
t$-ideal of $D$ is of height one. Yet before we show that a bit of
introduction is in order.

Call a domain $D$ a generalized UFD (GUFD) if every nonzero non unit of $D$
is expressible as a finite product of prime quanta. It was shown in \cite{[Z
thesis]} and later in \cite{[AAZ1]}, that every nonzero non unit of a GUFD
is uniquely expressible as a finite product of mutually coprime prime
quanta. We showed a GUFD to be a GCD domain that was also a generalized
Krull domain (GKD), that is a domain $D$ such that (a) $D$ is a locally
finite intersection of localizations at all height one primes of $D$ and (b) 
$D_{P}$ is a valuation domain for each height one prime of $D.$ It so
transpired that later, in \cite{[AMZ fin]}, domains with just the (a) part
were studied as weakly Krull domains. A weakly factorial domain $D$ is a
weakly Krull domain with $Cl_{t}(D)$ trivial. And as noted on page 350 of 
\cite{[AMZ]}, just above Corollary 3.8, a weakly Krull domain that is a GCD
domain is a GUFD.

\begin{proposition}
\label{Proposition P8} The following are equivalent for an integral domain $%
D:$ (1) $D$ is a weakly factorial GCD domain, (2) $D$ is VFD of $t$%
-dimension $1$, (3) $D$ is a generalized Krull GCD domain, (4) $D$ is a
GUFD, (5) $D$ is a weakly Krull GCD domain, (6) $D$ is a semi tenacious
domain of $t$-dimension $1,$ (7) $D$ is semi packed with $t$-dim($D)=1$ (8) $%
D$ is a semirigid GCD domain with $t$-dimension one.
\end{proposition}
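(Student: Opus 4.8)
The plan is to split the eight conditions into a classical block (1), (3), (4), (5), which are interderivable by results already in the literature, and a new block (2), (6), (7), (8), all carrying the hypothesis $t$-dim$(D)=1$, and then to bolt the two blocks together through condition (8).

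First I would dispose of the classical block. A weakly factorial domain is by definition a weakly Krull domain with $Cl_{t}(D)=0$, and a GCD domain automatically has $Cl_{t}(D)=0$; hence (1) and (5) say the same thing. A generalized Krull domain is a weakly Krull domain with the extra requirement that $D_{P}$ be a valuation domain at each height-one prime, and this requirement is free for a GCD domain (a GCD domain is a PVMD, so $D_{M}$ is a valuation domain at every maximal $t$-ideal); thus (3) and (5) coincide. Finally, as recorded on page 350 of \cite{[AMZ]}, a weakly Krull GCD domain is exactly a GUFD, while the converse (a GUFD is a generalized Krull GCD domain) is the content of \cite{[Z thesis]} and \cite{[AAZ1]}; this gives (4) $\Leftrightarrow$ (5). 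So (1), (3), (4), (5) are mutually equivalent with essentially no new work.

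Next I would run the new block through condition (8). For the easy directions note that, by Corollary \ref{Corollary C2}, a semirigid GCD domain is a VFD and, being a VFD, is semi packed, hence semi tenacious by Proposition \ref{Proposition P7}; carrying the hypothesis $t$-dim$(D)=1$ along gives (8) $\Rightarrow$ (2), (7), (6), while (2) $\Rightarrow$ (7) because a VFD is semi packed and (7) $\Rightarrow$ (6) by Proposition \ref{Proposition P7}. The work is in (6) $\Rightarrow$ (8). Here the \emph{key observation} is that in a HoFD of $t$-dimension one every homogeneous element $h$ is a packet: its minimal primes are nonzero $t$-ideals, each contained in the unique maximal $t$-ideal $M(h)$, which now has height one, so each such minimal prime equals $M(h)$ and $\sqrt{h}=M(h)$ is prime. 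A semi tenacious domain is pre-Schreier and hence a HoFD by Proposition \ref{Proposition P7A}; writing a homogeneous $h$ as a product of tenacious (power rigid) elements all lying in $M(h)$, each factor is a power-rigid packet, i.e.\ packed, and by Lemmas \ref{Lemma L3} and \ref{Lemma L5} their product $h$ is again packed. Thus every homogeneous element is packed, so by part (2) of Lemma \ref{Lemma L6} $D$ is a PVMD and hence a GCD domain; a GCD HoFD is a weakly Matlis GCD domain, which is a semirigid GCD domain by Corollary \ref{Corollary C2}, and it still has $t$-dim$(D)=1$, giving (8).

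Finally I would bolt the blocks together via (8) $\Leftrightarrow$ (5). By Corollary \ref{Corollary C2} a semirigid GCD domain is precisely a weakly Matlis GCD domain, and a weakly Matlis domain of $t$-dimension one is exactly a weakly Krull domain (finite $t$-character together with $t$-dim $1$ is the standard characterization of weak Krull-ness, the independence condition being automatic once every maximal $t$-ideal has height one); hence (8) and (5) describe the same domains. I expect the main obstacle to be the upgrade from \emph{packet} to \emph{packed} inside (6) $\Rightarrow$ (8): one must check that the tenacious factors of a homogeneous element really do all sit inside $M(h)$ and that power-rigidity is inherited by their product, so that Lemma \ref{Lemma L6}(2) can be invoked. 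The remaining implications are then bookkeeping with the cited equivalences.
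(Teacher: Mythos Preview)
Your argument is correct, but it is organized rather differently from the paper's. The paper routes the ``new'' block through condition (4) rather than (8): its key step is (6)$\Rightarrow$(4), where one shows directly that in $t$-dimension one every tenacious element $h$ is a prime quantum (since $\sqrt{h}=P(h)$ and any non-unit factor $k$ of $h$ has $\sqrt{k}=P(k)=P(h)$, so $h\mid k^{n}$). You instead prove (6)$\Rightarrow$(8) by showing that every homogeneous element is packed and then invoking Lemma~\ref{Lemma L6}(2) to obtain the GCD property; this leans more heavily on the machinery developed earlier in the paper, whereas the paper's (6)$\Rightarrow$(4) is a short self-contained computation that does not need Lemma~\ref{Lemma L6}. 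On the other hand, your organization buys something: the paper handles (1)$\Leftrightarrow$(2) by citing \cite[Corollary~1.9]{[CR]} and (1)$\Leftrightarrow$(3)$\Leftrightarrow$(4) by citing \cite[Theorem~10]{[AAZ1]}, while you recover (2) internally from the chain (2)$\Rightarrow$(7)$\Rightarrow$(6)$\Rightarrow$(8)$\Rightarrow$(2) and handle the classical block essentially from the definitions plus the remark on page~350 of \cite{[AMZ]}. Your anticipated ``obstacle'' (that the tenacious factors of a homogeneous $h$ all lie in $M(h)$ and that their product is again packed) is genuine but easily handled: non-unit factors of a homogeneous element automatically lie in $M(h)$, each tenacious factor is itself homogeneous with radical $M(h)$, and then Lemma~\ref{Lemma L4} (rather than just Lemmas~\ref{Lemma L3} and~\ref{Lemma L5}) gives the packedness of $h$ in one stroke since $\sqrt{h}=M(h)$ is prime.
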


\begin{proof}
(1) $\Leftrightarrow $ (2) follows from \cite[Corollary 1.9]{[CR]}. Next (1) 
$\Leftrightarrow $ (3) $\Leftrightarrow $ (4) follow from Theorem 10 of \cite%
{[AAZ1]}. (3) $\Rightarrow $ (5) because a generalized Krull domain is a
weakly Krull domain and (5) $\Rightarrow $ (3) follows from the observation
that $D$ weakly Krull implies that $D=\cap _{P\in X^{1}(D)}D_{P}$ where the
intersection is locally finite. Next if $D$ is GCD and $P$ of height one,
then $D_{P}$ is a rank one valuation. But then $D$ is a generalized Krull
domain. Now (4) $\Rightarrow $ (6) because a GUFD is weakly Krull and hence
of $t$-dimension one via \cite[Lemma 2.1]{[AMZ fin]}. Next, by definition, a
GUFD is tenacious and thus tenacious of $t$-dimension one. Next comes (6) $%
\Rightarrow $ (4). By definition a semi tenacious domain is a pre-Schreier
domain and being power rigid every tenacious element is pre-homogeneous. But
by Proposition \ref{Proposition P5} every tenacious element is homogeneous.
Let $h$ be a tenacious element in the semi tenacious domain of $t$-dimension
one and let $P(h)=\{x\in D|(h,x)_{v}\neq D|$ be the maximal $t$-ideal
containing $h.$ Now let $Q$ be the minimal prime of $h.$ Then $Q$ is of
height one and hence a maximal $t$-ideal, because $D$ is of $t$-dimension
one. Thus $\sqrt{h}=P(h).$ Finally let $k$ be a non unit factor of $h.$ Then 
$k\in P(h)$ and $\sqrt{k}=P(k)=P(h).$ Consequently $h|k^{n}$ for some
natural number $n.$ But this forces $h$ to be a prime quantum. Thus every
tenacious element is a prime quantum. Whence $D$ is a GUFD. Next, as a
packed element is tenacious we have (7) $\Rightarrow $ (6) and we have
already shown that (6) $\Rightarrow $ (4). Finally for (4) $\Rightarrow $
(7) note that a GUFD is of $t$-dimension one and semi packed. Finally (4) $%
\Rightarrow $ (8) because a GUFD is semirigid GCD and of $t$-dimension one.
For (8) $\Rightarrow $ (4), note that if $r$ is a rigid element in a
semirigid GCD domain of $t$-dimension one and if $h,k$ are non unit factors
of $r^{n}.$ Then $h$ and $k$ are non-coprime. (For if they were coprime then 
$GCD(h,r)$ and $GCD(k,r)$ would be coprime. Forcing $GCD(h,r)=1$ or $%
GCD(k,r)=1.$ Yet neither would be possible as $h,k|r^{n}.)$ Thus every pair
of non unit factor $r^{n}$ is comparable and $r$ is power rigid. But that
makes $r$ tenacious. Now complete the proof as in the proof of (6) $%
\Rightarrow $ (4).
\end{proof}

Let's face it, factorization started with humans playing with natural
numbers and finding out that if we factorize a random natural number $n$,
i.e. write $n$ as a product of two numbers there's a possibility that the
only way we can do that is $n=1.n=n.1.$ Such numbers that were different
from one got to be called irreducible if they "could not be factored
further" i.e. could not be written as $n=pq$ where $p$ and $q$ are natural
numbers different from $1.$ These days irreducible elements of this ilk are
called atoms. (Though primes were born, even before Democritus put his knife
to his apple to theorize about "atoms".) But rigid elements, packets packed
elements and prime quanta are different kinds of beasts, in that they would
keep on dividing as many times as you want without an end in sight and study
of these beasts was initiated in my doctoral thesis \cite{[Z thesis]}. A
fact the authors of \cite{[CR]} worked so hard to cloak, with the help of a
pre-arranged referee.

\bigskip

\bigskip

\end{document}